\newcommand{\Aut}[1]{\mathsf{Aut}(#1)}%
\newcommand{\Hom}{\operatorname{Hom}}%
\renewcommand{\emph}{\textbf}
\newcommand{\normaleq}{\;\;{\raisebox{.198ex}{$\vartriangleleft$}
    \mkern-18.5mu\leqslant}\;\;}%
\journalname{%
{\color{blue} Submitted version}, 
compiled~{\color{blue} June 15, 2008}  
in {\color{blue}Rheinfelden, Germany}
}
\begin{document}

\title{Geometric characterization 
of flat groups of automorphisms 
\thanks{Supported by Australian Research Council grant DP0556017.}
}


\author{
U. Baumgartner     \and
              G. Schlichting \and 
              G. Willis
}


\institute{Udo Baumgartner\at
              School of Mathematical and Physical Sciences, 
              The University of Newcastle, University Drive, Building V, 
              Callaghan, NSW 2308, Australia; 
              \email{Udo.Baumgartner@newcastle.edu.au}           
           \and
                G\"unter Schlichting\at 
	     Technische Universit\"at M\"unchen, 
	     Center of Mathematical Sciences, M8, 
	     D-85748 Garching bei M\"unchen, 
	     Germany; 
	      \email{schlicht@mathematik.tu-muenchen.de}
	  \and 
	     George A. Willis\at
              School of Mathematical and Physical Sciences, 
              The University of Newcastle, University Drive, Building V, 
              Callaghan, NSW 2308, Australia; 
              \email{George.Willis@newcastle.edu.au}           
}

\date{Received: date / Revised: date}
\maketitle

\begin{abstract}
If 
$\mathcal{H}$ is 
a flat group 
of automorphisms 
of finite rank $n$ 
of a totally disconnected, locally compact group $G$, 
then 
each orbit 
of $\mathcal{H}$ 
in the metric space $\mathcal{B}(G)$ 
of compact, open subgroups 
of $G$ 
is quasi-isometric to 
$n$-dimensional euclidean space. 
In this note 
we prove 
the following partial converse: 
Assume that 
$G$ is 
a totally disconnected, locally compact group 
such that 
$\mathcal{B}(G)$ is 
a proper metric space 
and 
let $\mathcal{H}$ be 
a group of automorphisms 
of $G$ 
such that 
some (equivalently every) orbit 
of $\mathcal{H}$ 
in $\mathcal{B}(G)$ 
is quasi-isometric to 
$n$-dimensional euclidean space, 
then 
$\mathcal{H}$ 
has a finite index subgroup 
which is flat 
of rank $n$. 
We can 
draw this conclusion 
under weaker assumptions. 
We also 
single out 
a naturally defined 
flat subgroup 
of such 
groups 
of automorphisms. 
\subclass{22D05 \and 22D45 \and 20E25 \and 20E36}
\end{abstract}

\section{Introduction}\label{sec:intro}

In this article, 
all automorphisms 
of topological groups 
are assumed to be 
continuous 
with continuous inverse. 
Flat groups of 
automorphisms of 
a totally disconnected, locally compact group 
were introducted 
and studied 
in~\cite{tidy<:commAut(tdlcG)}. 
A group 
of automorphisms, 
$\mathcal{H}$ say, 
of 
a totally disconnected, locally compact group $G$ 
is flat,  
if 
there exists 
a compact, open subgroup, 
$O$ say, 
of $G$, 
called \emph{minimizing for $\mathcal{H}$}, 
that minimizes 
all the displacement functions 
of elements 
in $\mathcal{H}$ 
on the metric space, 
$(\mathcal{B}(G),d)$,  
of compact, open subgroups 
of $G$ 
relative to 
the distance function 
$d(V,W):=\log\bigl(|V\colon V\cap W|\cdot |W\colon W\cap V|\bigr)$. 
One can 
rephrase 
this condition 
by saying that 
for every $\varphi\in\mathcal{H}$ 
the integer 
$|\varphi(O) \colon \varphi(O)\cap O|$ 
attains 
the value of 
the \emph{scale function}, 
$s_G$, 
at $\varphi$, 
where 
$s_G$ 
is defined by 
$s_G(\varphi) := \min \{|\varphi(V ) \colon \varphi(V)\cap V| \colon V\in \mathcal{B}(G)\}$ .

Flat groups 
are, 
in a sense, 
generalizations of 
split tori 
in semisimple algebraic groups 
over nonarchimedian local fields. 
They admit 
a dynamically defined 
`root system' $\Phi_\mathcal{H}$, 
which governs 
the decompsition 
of 
any minimizing subgroup 
for $\mathcal{H}$
into 
a product of 
associated `eigenfactors'  
on which 
every element 
of $\mathcal{H}$ 
is either 
expanding or contracting. 

In this article 
we study 
the question 
of the extent 
to which 
flat groups 
are characterized by 
their geometric properties. 
These properties 
are 
summarized in 
the following theorem.

\begin{theorem}[geometric properties of flat groups of automorphisms]
\label{thm:norm>flat-gp}
~\\
Let $\mathcal{H}$ be 
a flat group of 
automorphisms of 
a totally disconnected, locally compact group $G$ 
and 
let $O$ be minimizing for $\mathcal{H}$. 
Put $\mathcal{H}(1):=\{\varphi\in\mathcal{H}\colon s_G(\varphi)=1=s_G(\varphi^{-1})\}$.
Then 
\begin{enumerate}
\item 
$\mathcal{H}_O=\mathcal{H}(1)\normaleq \mathcal{H}$ 
and 
$\mathcal{H}/\mathcal{H}(1)$ is a free abelian group, 
whose $\mathbb{Z}$-rank 
is called 
the \emph{rank} 
of $\mathcal{H}$; 
\item
there is 
a set 
$\Phi_\mathcal{H}\subseteq \Hom(\mathcal{H},\mathbb{Z})$ 
of surjective homomorphisms 
(which is independent 
of $O$) 
such that 
$\bigcap_{\rho\in\Phi_\mathcal{H}} \ker(\rho)=\mathcal{H}(1)$ 
and 
such that 
for each $\varphi\in \mathcal{H}$ 
we have 
$\rho(\varphi)\neq 0$ for only finitely many $\rho$ in $\Phi_\mathcal{H}$; 
\item 
there are 
positive integers $t_\rho$ for $\rho\in \Phi_\mathcal{H}$ 
(which are independent 
of $O$) 
such that 
the function $\|\cdot\|_\mathcal{H}$ 
defined on $\mathcal{H}/\mathcal{H}(1)$ 
by 
the rule 
$\|\varphi \mathcal{H}(1))\|_\mathcal{H}:=d(\varphi(O),O)$ 
takes the value 
$
\sum_{\rho\in\Phi_\mathcal{H}} \log(t_\rho)\,|\rho(\varphi)|
$
at $\varphi\mathcal{H}(1)$ 
for $\varphi\in\mathcal{H}$. 
\end{enumerate}
Hence 
the function $\|\cdot\|_\mathcal{H}$ 
extends to a norm on the vector space $\mathbb{R}\otimes \mathcal{H}/\mathcal{H}(1)$ 
and 
each orbit 
of a flat group of automorphisms 
of finite rank 
in $\mathcal{B}(G)$ 
is quasi-isometric to $\mathbb{R}\otimes \mathcal{H}/\mathcal{H}(1)$.  
\end{theorem}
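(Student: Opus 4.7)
The plan is to deduce everything from the common eigenfactor decomposition of a minimizing subgroup established in~\cite{tidy<:commAut(tdlcG)}: for a flat $\mathcal{H}$ with minimizing $O$, there exist a finite set of homomorphisms $\rho\colon\mathcal{H}\to\mathbb{Z}$ and, for each such $\rho$, a positive integer $t_\rho$ and a closed subgroup $O_\rho\subseteq O$ whose product (in any order) equals $O$ and on which the index $|\varphi(O_\rho):\varphi(O_\rho)\cap O_\rho|$ equals $t_\rho^{\max(\rho(\varphi),0)}$ for every $\varphi\in\mathcal{H}$; both the collection of $\rho$ and the integers $t_\rho$ are intrinsic to $\mathcal{H}$ there. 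Setting $\Phi_\mathcal{H}$ equal to the set of those $\rho$ with $t_\rho>1$ immediately yields the $O$-independent data required in Parts~2 and~3.

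For Part 1 I would introduce the exponent homomorphism $\Psi\colon\mathcal{H}\to\bigoplus_{\rho\in\Phi_\mathcal{H}}\mathbb{Z}$, $\varphi\mapsto(\rho(\varphi))_\rho$. The inclusion $\mathcal{H}_O\subseteq\mathcal{H}(1)$ is trivial and $\mathcal{H}_O\subseteq\ker\Psi$ follows from the index formula. Conversely, the identity $s_G(\varphi)=\prod_\rho t_\rho^{\max(\rho(\varphi),0)}$ together with $s_G(\varphi)=s_G(\varphi^{-1})=1$ forces $\rho(\varphi)=0$ for all $\rho$, i.e.\ $\mathcal{H}(1)\subseteq\ker\Psi$; and elements of $\ker\Psi$ act on each $O_\rho$ with index~$1$ in both directions and hence fix $O$, giving $\ker\Psi\subseteq\mathcal{H}_O$. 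The three subgroups thus coincide, normality is automatic as a kernel, and $\mathcal{H}/\mathcal{H}(1)\hookrightarrow\bigoplus_{\Phi_\mathcal{H}}\mathbb{Z}$ is free abelian of finite rank. Part 2 then follows directly from the choice of $\Phi_\mathcal{H}$ together with the intrinsic nature of the decomposition.

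For Part 3 I would split the two indices in $d(\varphi(O),O)=\log|\varphi(O):\varphi(O)\cap O|+\log|O:\varphi(O)\cap O|$ across the product decomposition. The first factors as $\prod_\rho t_\rho^{\max(\rho(\varphi),0)}$; for the second I would use $|O:\varphi(O)\cap O|=|\varphi^{-1}(O):\varphi^{-1}(O)\cap O|=\prod_\rho t_\rho^{\max(-\rho(\varphi),0)}$, obtained by applying the index formula to $\varphi^{-1}$. Summing logarithms and using $\max(x,0)+\max(-x,0)=|x|$ gives the asserted formula. Well-definedness on $\mathcal{H}/\mathcal{H}(1)$ comes from Part 1; $\mathbb{Z}$-homogeneity and the triangle inequality are inherited from $|\cdot|$ on $\mathbb{Z}$; extending by $\mathbb{R}$-linearity produces a norm on $\mathbb{R}\otimes\mathcal{H}/\mathcal{H}(1)$. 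The orbit map $\varphi\mathcal{H}(1)\mapsto\varphi(O)$ is then an isometric bijection from this lattice onto $\mathcal{H}\cdot O$, and since a lattice in a finite-dimensional normed vector space is cobounded, the orbit is quasi-isometric to the whole ambient space.

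The main obstacle is not the bookkeeping above but the structural input itself: proving that a minimizing subgroup really admits the asserted common eigenfactor decomposition indexed by $\mathbb{Z}$-valued characters, and that the resulting pair $(\Phi_\mathcal{H},(t_\rho)_{\rho})$ is intrinsic to $\mathcal{H}$. Once those results from~\cite{tidy<:commAut(tdlcG)} are invoked, the theorem reduces to the index computations sketched here.
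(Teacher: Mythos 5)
Your proposal is correct and takes essentially the same route as the paper, which likewise derives all three parts from the eigenfactor decomposition of a minimizing (tidy) subgroup established in~\cite{tidy<:commAut(tdlcG)} and offers no detail beyond that citation, so your index bookkeeping is exactly the intended argument. The only point you elide is that the final claim concerns \emph{each} orbit rather than just $\mathcal{H}.O$; the paper obtains this from Lemma~\ref{lem:comparison(orbits(Gs(Isoms)))}, and your argument for the orbit of the minimizing subgroup combines with that lemma to give the general case.
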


Theorem~\ref{thm:norm>flat-gp} 
follows from 
results 
proven in 
the paper~\cite{tidy<:commAut(tdlcG)}; 
the reader 
may 
consult 
the survey article~\cite{tdlcGs-geomObjs} 
for a compact exposition, 
with 
only short hints 
of proofs; 
the statement 
on all orbits 
of $\mathcal{H}$ 
in $\mathcal{B}(G)$ 
is obtained 
by applying 
the following lemma 
with $\mathbf{B}:=\mathcal{B}(G)$ 
and 
the natural action of $\mathcal{H}$ 
on $\mathbf{B}$. 
The proof 
of the lemma 
is left to 
the reader. 

\begin{lemma}[distances on orbits under isometric actions differ by a constant]
\label{lem:comparison(orbits(Gs(Isoms)))}
Let 
a group $\mathcal{H}$ 
act by isometries on a metric space $\mathbf{B}$.  
Then 
any two orbits 
of $\mathcal{H}$ 
in $\mathbf{B}$ 
are $(1,\epsilon)$-quasi-isometric, 
with $\epsilon$ only depending on 
the pair of orbits.   
In particular 
all orbits 
of $\mathcal{H}$ 
in $\mathbf{B}$ 
have the same 
type of growth. 
\end{lemma}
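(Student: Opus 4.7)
The plan is to construct an explicit $(1, 2\epsilon)$-quasi-isometry between any two given orbits, where $\epsilon$ will be the distance between a chosen pair of base points, one in each orbit.

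First, I would fix $x,y \in \mathbf{B}$ in the two orbits and set $\epsilon := d(x,y)$. Using the axiom of choice, I select for each $p \in \mathcal{H}\cdot x$ an element $h_p \in \mathcal{H}$ with $h_p\cdot x = p$, and define $f \colon \mathcal{H}\cdot x \to \mathcal{H}\cdot y$ by $f(p) := h_p\cdot y$. The additive constant in the statement of the lemma will then be $2\epsilon$, which depends only on the pair of orbits through the choice of base points.

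Next I would verify the two quasi-isometry conditions. For the distance estimate, let $p,q \in \mathcal{H}\cdot x$ and set $g := h_p^{-1}h_q$. Since $\mathcal{H}$ acts by isometries, $d(p,q) = d(x,gx)$ and $d(f(p), f(q)) = d(y,gy)$; the triangle inequality applied along $x \to gx \to gy \to y$ (and symmetrically) yields $|d(x,gx) - d(y,gy)| \leq 2d(x,y) = 2\epsilon$. For coarse surjectivity, given $q = h\cdot y$ the point $p := h\cdot x$ satisfies $d(f(p), q) \leq 2\epsilon$ by the same type of argument, because $h_p^{-1}h$ stabilizes $x$ and hence moves $y$ by at most $2\epsilon$.

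The growth statement is then immediate from the standard fact that a $(1,\epsilon)$-quasi-isometry preserves the growth type of a metric space. There is no serious obstacle here; the only mild subtlety is that the assignment $p \mapsto h_p$ involves a choice (the stabilizer of $x$ in $\mathcal{H}$ need not stabilize $y$), but the triangle-inequality estimates above absorb this ambiguity into the additive constant. The entire argument amounts to a few applications of the triangle inequality combined with the isometry property of the action.
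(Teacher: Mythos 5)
The paper leaves this proof to the reader, so there is no official argument to compare against; your proposal is the standard, evidently intended one, and it is correct. Fixing base points $x,y$ with $\epsilon=d(x,y)$, choosing $h_p$ with $h_p\cdot x=p$, and bounding $|d(x,gx)-d(y,gy)|\le 2\epsilon$ by the triangle inequality gives exactly the claimed $(1,2\epsilon)$-quasi-isometry, and coarse surjectivity follows as you say because $h_p^{-1}h$ stabilizes $x$.
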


In this paper  
we 
address the question 
whether 
flat groups 
are 
virtually 
characterized by 
the rough isometry class 
of their orbits 
in 
the space 
of compact, open subgroups; 
more precisely, 
we study 
the following problem. 

\begin{problem}[question whether flatness can be characterized geometrically]
\label{Baumgartner_2}
Let $G$ be 
a totally disconnected, locally compact group 
and 
let $\mathcal{H}$ 
be a group of automorphisms 
of $G$ 
such that some (equivalently, any) 
orbit of $\mathcal{H}$ 
in the metric space 
of compact, open subgroups 
of $G$ 
is quasi-isometric 
to $\mathbb{R}^n$ 
for some $n$ in $\mathbb{N}$. 
Does it follow that 
$\mathcal{H}$ 
has a finite index subgroup 
which is 
a flat group of automorphisms of $G$ 
of rank $n$? 
\end{problem} 

Under the conditions 
described 
in Problem~\ref{Baumgartner_2},  
the group $\mathcal{H}$ itself 
need not be 
flat; 
two examples 
illustrating this 
will be given 
in Section~\ref{sec:examples&applications}. 

A geometric characterization 
as formulated 
in Problem~\ref{Baumgartner_2} 
can be expected 
to be 
useful 
in the detection 
and classification 
of flat subgroups 
of totally disconnected, locally compact groups 
that are 
automorphism groups 
of a geometric structure 
because 
for such groups 
the space 
of compact, open subgroups 
will often be 
related to 
that geometric structure. 
An example 
is presented 
in Theorem~\ref{thm:max-flat<Gs(semisimpleGs)}.

While 
we can not solve 
Problem~\ref{Baumgartner_2} 
in full generality, 
Theorem~\ref{thm:MainThm_weak-form} 
below 
solves it 
affirmatively 
in the important case 
where 
the metric space 
of compact, open subgroups 
is proper; 
we are 
actually 
able to obtain 
the same conclusion 
as 
in Theorem~\ref{thm:MainThm_weak-form} 
under weaker hypotheses, 
see Theorem~\ref{thm:MainThm}. 

We also have 
quite strong control 
over the flat group 
of finite index 
whose existence 
we guarantee; 
compare 
Theorem~\ref{thm:bounded-conjugacy-classe=max-normal-flat}. 
However, 
there is 
probably 
no way 
to describe 
a minimizing subgroup 
for that flat subgroup 
of finite index. 
Abstract 
flatness criteria 
that do not 
effectively produce 
a minimizing subgroup 
have been proved 
before: 
a purely algebraic criterion 
by Willis 
in~\cite{tidy<:commAut(tdlcG)} 
and 
several 
`algebraic-bounded' criteria 
by Shalom and Willis 
in~\cite{polycyclicGs=virt-flat}, 
one of which, 
restated here 
as Theorem~\ref{thm:nilpotent=flat} 
we use 
in the proof 
of Theorems~\ref{thm:MainThm_weak-form} 
and~\ref{thm:MainThm}.

\begin{theorem}[geometric characterization of flatness in the proper case]
\label{thm:MainThm_weak-form}
~\\Let $G$ be 
a totally disconnected, locally compact group 
and 
let $\mathcal{H}$ 
be a group of automorphisms 
of $G$ 
such that some (equivalently, any) 
orbit of $\mathcal{H}$ 
in the metric space 
of compact, open subgroups 
of $G$ 
is quasi-isometric 
to $\mathbb{R}^n$ 
for some $n$ in $\mathbb{N}$. 
Assume that 
the metric space $\mathcal{B}(G)$ 
of compact, open subgroups 
of $G$ 
is proper. 
Then
$\mathcal{H}$ 
has a finite index subgroup 
which is 
a flat group of automorphisms of $G$ 
of rank $n$. 
\end{theorem}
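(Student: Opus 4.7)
The plan is to combine properness of $\mathcal{B}(G)$ with a Schwarz--Milnor argument to produce a finitely generated subgroup of $\mathcal{H}$ quasi-isometric to $\mathbb{R}^n$, apply Gromov's polynomial growth theorem to recognize a finite-index subgroup as virtually $\mathbb{Z}^n$, invoke Theorem~\ref{thm:nilpotent=flat} to upgrade the resulting nilpotent subgroup to a flat one, and finally check the finite-index condition inside $\mathcal{H}$ itself.

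First, observe that the distance function on $\mathcal{B}(G)$ takes values in $\log\mathbb{N}$, so $\mathcal{B}(G)$ is uniformly discrete; combined with properness, its balls are \textbf{finite}. Fix a base point $O\in\mathcal{B}(G)$; the orbit $X:=\mathcal{H}\cdot O$ is then uniformly discrete and, by hypothesis, quasi-isometric to $\mathbb{R}^n$. Following the standard Schwarz--Milnor recipe, choose a sufficiently large radius $R$ and pick finitely many $\varphi_1,\dots,\varphi_k\in\mathcal{H}$ whose translates of $O$ form a net for $X\cap B(O,2R)$; using geodesic approximation in $\mathbb{R}^n$ and the quasi-isometry, any $\psi(O)\in X$ can be reached from $O$ by a word in the $\varphi_i^{\pm1}$ of length bounded by a constant multiple of $d(\psi(O),O)$. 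Setting $\mathcal{H}_0:=\langle\varphi_1,\dots,\varphi_k\rangle$, the orbit $\mathcal{H}_0\cdot O$ is cobounded in $X$.

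The main obstacle is converting coboundedness into a genuine quasi-isometry between $\mathcal{H}_0$ with the word metric and $X$: a priori the point stabilizer $\mathcal{H}_0\cap\mathcal{H}_O$ could be large. To overcome this, observe that every element of this intersection permutes the finite set $B(O,R)\cap X$; the resulting homomorphism into $\mathrm{Sym}(B(O,R)\cap X)$ has finite-index kernel $\mathcal{H}_1\leq\mathcal{H}_0$, and for elements of $\mathcal{H}_1\cap\mathcal{H}_O$ fixing such a net of $X$ one obtains a bound on word length in terms of displacement. This identifies $\mathcal{H}_1$, with its word metric, as quasi-isometric to $\mathbb{R}^n$. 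Gromov's polynomial growth theorem then ensures $\mathcal{H}_1$ is virtually nilpotent; matching polynomial degrees forces its Hirsch rank to equal $n$, so a finite-index torsion-free nilpotent subgroup $\mathcal{F}\leq\mathcal{H}_1$ has Hirsch rank exactly $n$.

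By Theorem~\ref{thm:nilpotent=flat}, $\mathcal{F}$ is flat, and by Theorem~\ref{thm:norm>flat-gp} its rank must equal $n$. For the finite-index condition in $\mathcal{H}$, coboundedness of $\mathcal{F}\cdot O$ in $X$ together with the finiteness of balls of $\mathcal{B}(G)$ shows that $\mathcal{H}=\mathcal{F}\cdot T\cdot\mathcal{H}_O$ for some finite set $T$; since elements of $\mathcal{H}_O$ automatically have scale $1$ in both directions (they fix $O$), they can be absorbed into a flat supergroup of $\mathcal{F}$ of the same rank $n$, delivering the required finite-index flat subgroup of $\mathcal{H}$.
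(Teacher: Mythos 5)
Your plan shares two ingredients with the paper (Gromov's theorem and the flatness criterion of Theorem~\ref{thm:nilpotent=flat}), but the Schwarz--Milnor step contains a genuine gap that your later steps inherit. For the orbit map $\mathcal{H}_1\to\mathcal{H}_1.O$ to be a quasi-isometry you need the stabilizer $\mathcal{H}_1\cap\mathcal{H}_O$ to be finite, and properness of $\mathcal{B}(G)$ does not give you that: the stabilizer in $\Aut G$ of a compact, open subgroup --- indeed the kernel of the whole action of $\mathcal{H}$ on the orbit --- can be infinite, and passing to the kernel of the permutation action on the finite ball $B(O,R)\cap X$ does not remove it, since such elements already act trivially on all of $X$. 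Concretely, take $G=\mathbb{Q}_p$, so $\Aut G=\mathbb{Q}_p^\times$ and $\mathcal{B}(G)$ is proper, and let $\mathcal{H}=\langle p,u\rangle$ with $u\in 1+p\mathbb{Z}_p$ of infinite order. Then $u$ fixes every compact, open subgroup, the orbit of $\mathbb{Z}_p$ is quasi-isometric to $\mathbb{R}$, yet your $\mathcal{H}_1$ can be $\mathbb{Z}^2$ (quasi-isometric to $\mathbb{R}^2$), so the assertions ``$\mathcal{H}_1$ is quasi-isometric to $\mathbb{R}^n$'' and ``the Hirsch rank of $\mathcal{F}$ equals $n$'' both fail. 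Your closing step is also unjustified: a decomposition $\mathcal{H}=\mathcal{F}\cdot T\cdot\mathcal{H}_O$ with $T$ finite does not give $\mathcal{F}$ finite index when $\mathcal{H}_O$ is infinite, and ``absorbing'' stabilizer elements into a flat supergroup of the same rank is precisely the move that Example~\ref{ex:} of the paper shows can fail: there $s_1$ fixes $O=\mathbb{Z}_p^2$ and $\langle u\rangle$ is flat of rank~$1$, yet $\langle u,s_1\rangle$ is not flat.

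The missing idea is a mechanism for handling the possibly infinite ``bounded part'' of $\mathcal{H}$, and this is where the paper's proof differs structurally from yours. It forms the connected, locally finite graph $X$ on the orbit, takes the closure $H$ of the image of $\mathcal{H}$ in $\Aut X$ --- a compactly generated, totally disconnected, locally compact group of polynomial growth --- and applies Losert's structure theorem to obtain a maximal compact normal subgroup $C$ with discrete, finitely generated quotient of polynomial growth; Gromov's theorem is applied to $H/C$, not to a subgroup of $\mathcal{H}$. Pulling back yields a finite-index $\mathcal{H}_0\leqslant\mathcal{H}$ and a normal subgroup $\mathcal{N}_0$ (mapping into the compact group $C$, hence stabilizing the compact, open subgroup $V=\bigcap_{n_0\in\mathcal{N}_0}n_0(O)$) with $\mathcal{H}_0/\mathcal{N}_0$ finitely generated nilpotent; only then is Theorem~\ref{thm:nilpotent=flat} invoked, applied to the pair $(\mathcal{N}_0,\mathcal{H}_0)$ rather than to a nilpotent group alone. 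Without some substitute for this bounded-by-nilpotent reduction, the approach via a finitely generated subgroup quasi-isometric to $\mathbb{R}^n$ cannot be completed.
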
 

We now 
discuss 
known results 
that are 
similar to 
and, 
it turns out, 
also 
related to 
Theorem~\ref{thm:MainThm_weak-form}. 
These results 
 illustrate 
why 
one might expect 
to obtain 
such a result 
in the first place. 

If 
we ignore 
the special form of 
the group $\mathcal{H}$ 
that acts 
on the quasi-flat orbits,  
what 
we seek 
may be called 
a `quasi-isometric version' 
of Bieberbach's First Theorem 
on space groups. 
(A statement 
of Bieberbach's First Theorem 
can be found 
e.g. 
in~\cite[Theorem~1$'$]{BieberbachThm(SGs)+disc-unif<Gs(LieGs)} 
or 
in~\cite[Bieberbach Theorem~1]{account(Thry(crystallogGs))} 
and 
in the books 
\cite{found(DiffGeom)1,almostBieberbachGs,Geom(non-pos-curvMfds),BieberbachGs+flatMfds};  
for the original articles 
see~\cite{BewegG(EuklidR),BewegG(EuklidR)2}.) 

We are not aware of 
the existence of 
such a 
`quasi-isometric version' 
of Bieberbach's First Theorem,  
but 
several 
well-known results 
have similar conclusions.  
For example,  
by Th\'eor\`eme~17 in~\cite{PM83} 
$\mathbb{Z}^n$ is quasi-isometrically rigid 
for every $n$, 
that is, 
a finitely generated group 
that is 
quasi-isometric to 
a free abelian group 
of rank~$n$ 
has a subgroup 
of finite index 
that is 
a free abelian group 
of rank~$n$. 
The latter result 
relied 
until Shalom's paper~\cite{harmAna+Cohom+large-scaleGeom(amenGs)} 
(Theorem~1.1, proven on p. 126; 
see also 
Corollary~1.5 in~\cite{isom-acts(HilbertSs)_growth(cocycles)}, proven in \S4.3),  
on 
Gromov's characterization of 
finitely generated groups 
of polynomial growth, 
which 
will 
also 
play 
a prominent role 
in this paper. 

A graph-theoretic 
analogue 
of Bieberbach's First Theorem 
may be seen 
in 
Theorem~1  
of Trofimov's paper~\cite{Gphs_poly-growth} 
(cited as Theorem~2.1 
in the survey paper~\cite{survey>gphs(poly-growth)}). 
Trofimov's work~\cite{Gphs_poly-growth} 
implies 
Bieberbach's First and Third Theorem 
as noted 
on page~417 
of that article 
and 
also implies 
Gromov's characterization of 
finitely generated groups 
of polynomial growth 
(but note that 
the main result 
in the paper~\cite{{Gs(poly-growth)+expand-maps}} 
that shows 
Gromov's result 
is used in 
Trofimov's work). 
In 
the graph-theoretic 
context 
the papers~\cite{fin-contr(Gphs_poly-growth),Gs_act>Gphs_poly-growth}  
also 
discuss 
interesting 
aspects. 
The above examples 
and 
our own success 
in proving 
an analogue 
of Bieberbach's First Theorem 
in our, 
admittedly restricted,  
context 
suggest,  
that 
it might be 
possible 
and worthwhile 
to prove 
a 
`quasi-isometric version' 
of Bieberbach's Theorems 
on space groups. 

It would also be 
of interest 
to determine 
whether 
our Main Theorem 
can also be 
derived 
using 
Theorem~1  
in Trofimov's paper~\cite{Gphs_poly-growth}, 
thus avoiding 
the use of 
Losert's result~\cite[Proposition~1]{struc(G-poly_growth)2}.  

\section{Outline 
of the Proof 
of the Main Theorem 
\&\ 
Statement of further Results}\label{sec:proof(MainThm)-outline}

As our first step,  
we use 
a structure result 
on compactly generated, locally compact groups 
of polynomial growth 
due to Losert 
together with 
Gromov's theorem 
on finitely generated groups of polynomial growth 
to show 
Theorem~\ref{thm:lf-polygrow-orbits=>fg-nilp-mod-Stab(cpop<G)} below. 

We 
now recall 
terminology 
used in 
the statement of 
that theorem. 
Let $d>0$. 
A metric space 
is called 
\emph{$d$-connected} 
if and only if 
for any 
ordered pair $(x,y)$ 
of points 
in the space 
one can find 
a finite sequence of points 
beginning with $x$ 
and 
ending with $y$,  
whose consecutive terms 
are 
at most $d$ apart. 
A metric space 
is called 
\emph{coarsely connected} 
if and only if 
it is $d$-connected 
for some 
positive number $d$.  
The image 
of a coarsely connected space 
under 
a quasi-isometric embedding 
as well as 
any $\epsilon$-neighborhood of 
a coarsely connected space 
is itself 
coarsely connected; 
in particular, 
the property 
of being 
coarsely connected 
is invariant under 
quasi-isometry. 
Further, 
call 
a set of automorphisms $B$ 
of a totally disconnected, locally compact group $G$ 
\emph{bounded\/} 
if and only if 
the set $B.V$ 
has bounded diameter 
for some 
(equivalently, every) 
$V$ in $\mathcal{B}(G)$. 
%

%
If 
in Theorem~\ref{thm:lf-polygrow-orbits=>fg-nilp-mod-Stab(cpop<G)} 
the group $\mathcal{H}$ 
is flat 
of finite rank, 
then 
any minimizing subgroup 
for $\mathcal{H}$ 
satisfies 
the conditions  
on the group $O$ 
in that theorem 
by part~3 
of Theorem~\ref{thm:norm>flat-gp}. 
%
Later 
in the paper 
we will 
use this observation. 
In Section~\ref{sec:examples&applications},  
we will give 
examples 
of groups of automorphisms 
whose orbits 
are quasi-flats 
which are 
not flat.

\begin{theorem}[automorphism groups 
with 
a proper, 
coarsely connected orbit 
of polynomial growth 
are 
virtually 
bounded-by-finitely-generated-nilpotent]  
\label{thm:lf-polygrow-orbits=>fg-nilp-mod-Stab(cpop<G)}
~\\
Let $G$ be 
a totally disconnected, locally compact group 
and 
$\mathcal{H}\leqslant \Aut G$.  
Assume that 
there is 
an $O$ in $\mathcal{B}(G)$
such that 
the orbit $\mathcal{H}.O$ 
is 
proper, 
coarsely connected  
and 
of polynomial growth.  
%
Then 
there is 
a subgroup $\mathcal{H}_0$ 
of finite index 
in $\mathcal{H}$, 
$\mathcal{N}_0\normaleq \mathcal{H}_0$ 
and 
$V\in\mathcal{B}(G)$ 
such that 
$\mathcal{N}_0$ stabilizes $V$ 
and 
$\mathcal{H}_0/\mathcal{N}_0$ is 
a finitely generated nilpotent group. 
\end{theorem}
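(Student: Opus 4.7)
The plan is to build an auxiliary locally compact group from the action of $\mathcal{H}$ on $\mathcal{H}.O$, apply the structural results of Losert and Gromov to that group, and pull the conclusion back to $\mathcal{H}$.

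First I equip $\operatorname{Isom}(\mathcal{H}.O)$ with the topology of pointwise convergence. Because $\mathcal{H}.O$ is discrete and proper, its balls are finite, and an Arzel\`a--Ascoli argument shows that point stabilizers in $\operatorname{Isom}(\mathcal{H}.O)$ are compact open. Let $I$ be the closure of the natural image of $\mathcal{H}$ in $\operatorname{Isom}(\mathcal{H}.O)$; then $I$ is locally compact, acts transitively on $\mathcal{H}.O$, and has $I_O$ as a compact open point stabilizer. Coarse connectedness of $\mathcal{H}.O$ together with finiteness of balls shows, by a Milnor--\v{S}varc-type argument, that the set $S:=\{g\in I:d(gO,O)\le d\}$ (for $d$ a coarse connectedness constant for $\mathcal{H}.O$) is a compact generating set for $I$ and that the orbit map $I\to\mathcal{H}.O$ is a quasi-isometry. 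In particular, $I$ is compactly generated and of polynomial growth.

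Next I apply Losert's Proposition~1 in~\cite{struc(G-poly_growth)2} to obtain a compact normal subgroup $K\normal I$ such that $I/K$ is a Lie group. The stabilizer in $I/K$ of the finite $K$-orbit $K.O$ is the subgroup $I_OK/K$, which is both open and compact (image of the compact group $I_O$). Since every open subgroup of a Lie group contains its identity component, this forces $(I/K)^0$ to be compact. Let $K_1\normal I$ be the preimage in $I$ of $(I/K)^0$; then $K_1$ is compact normal, and the quotient $I/K_1\cong (I/K)/(I/K)^0$ is a compactly generated totally disconnected Lie group, hence a finitely generated discrete group. As a quotient of $I$, it inherits polynomial growth, so Gromov's theorem supplies a subgroup $L\le I/K_1$ of finite index that is finitely generated nilpotent.

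Finally I pull back to $\mathcal{H}$. Writing $\pi\colon\mathcal{H}\to I$ and $q\colon I\to I/K_1$ for the canonical maps, set $\mathcal{N}_0:=\pi^{-1}(K_1)$ and $\mathcal{H}_0:=\pi^{-1}(q^{-1}(L))$. Since the image of $\mathcal{H}$ in $I$ is dense and $I/K_1$ is discrete, the composition $q\circ\pi\colon\mathcal{H}\to I/K_1$ is surjective, yielding $\mathcal{H}/\mathcal{N}_0\cong I/K_1$. Hence $\mathcal{H}_0$ has finite index in $\mathcal{H}$, $\mathcal{N}_0\normal\mathcal{H}_0$, and $\mathcal{H}_0/\mathcal{N}_0\cong L$ is finitely generated nilpotent. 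For the stabilized compact open subgroup, observe that $F:=K_1.O\subseteq\mathcal{H}.O$ is finite (continuous image of the compact group $K_1$ in a discrete space), so
\[
V:=\bigcap_{W\in F}W
\]
is a finite intersection of compact open subgroups of $G$ and hence a member of $\mathcal{B}(G)$. Because $\pi(\mathcal{N}_0)\subseteq K_1$ permutes $F$, every element of $\mathcal{N}_0$ permutes $F$ and therefore stabilizes $V$. I expect the main obstacle to be Step~1: the identification of $I$ as a compactly generated locally compact group of polynomial growth via the Milnor--\v{S}varc argument, for this is where all three hypotheses (properness, coarse connectedness, and polynomial growth of $\mathcal{H}.O$) must be combined in order to unlock the subsequent appeal to Losert's and Gromov's theorems.
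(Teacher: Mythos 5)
Your proposal is correct and follows essentially the same strategy as the paper's proof: complete $\mathcal{H}$ to a compactly generated, totally disconnected, locally compact group of polynomial growth acting on the proper, coarsely connected orbit, apply Losert's Proposition~1 and Gromov's theorem, and pull the resulting virtually nilpotent quotient back to $\mathcal{H}$, stabilizing the intersection of a finite orbit of compact open subgroups. The only differences are cosmetic: the paper works with the automorphism group of a locally finite graph built on $\mathcal{H}.O$ rather than with $\operatorname{Isom}(\mathcal{H}.O)$ directly, and it obtains discreteness of the Lie quotient from the maximality of the compact normal subgroup furnished by Losert's result, whereas you pass explicitly to the preimage of the identity component.
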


We then apply 
a flatness criterion 
from~\cite{polycyclicGs=virt-flat},  
restated here 
as Theorem~\ref{thm:nilpotent=flat}.

\begin{theorem}[bounded-by-finitely-generated-nilpotent groups are flat]
\label{thm:nilpotent=flat}
~\\
Let $\mathcal{N}_0\normaleq \mathcal{H}_0\leqslant \Aut G$ 
and 
suppose that 
$\mathcal{N}_0$ stabilizes 
some compact, open subgroup $V$ 
of $G$ 
and that 
$\mathcal{H}_0/\mathcal{N}_0$ is a finitely generated nilpotent group. 
Then 
$\mathcal{H}_0$ is flat 
of finite rank. 
\end{theorem}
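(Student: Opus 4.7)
The plan is to induct on a complexity invariant of the finitely generated nilpotent quotient $\mathcal{H}_0/\mathcal{N}_0$ (the Hirsch length being the natural candidate) while building up a common tidy subgroup using Willis's single-automorphism tidying procedure. The preliminary observation is that every $\nu\in\mathcal{N}_0$ fixes $V$, hence $s_G(\nu)=s_G(\nu^{-1})=1$, so the scale on $\mathcal{H}_0$ factors through $\mathcal{H}_0/\mathcal{N}_0$ and flatness of $\mathcal{H}_0$ reduces to producing a single compact open subgroup which is tidy for a chosen finite set of lifts $\varphi_1,\ldots,\varphi_k$ of generators of $\mathcal{H}_0/\mathcal{N}_0$.

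When $\mathcal{H}_0/\mathcal{N}_0$ is finite (the base case) every $\varphi\in\mathcal{H}_0$ has a power in $\mathcal{N}_0$, so multiplicativity of the scale forces $\mathcal{H}_0=\mathcal{H}_0(1)$; then $O:=\bigcap_{\varphi\in\mathcal{H}_0}\varphi(V)$ is a finite intersection of compact open subgroups and is $\mathcal{H}_0$-stable, so $\mathcal{H}_0$ is flat of rank $0$. For the inductive step pick a central element of infinite order in $Z(\mathcal{H}_0/\mathcal{N}_0)$ with a lift $\varphi_0\in\mathcal{H}_0$; the subgroup $\langle\mathcal{N}_0,\varphi_0\rangle$ is then normal in $\mathcal{H}_0$ because $\varphi_0$ is central modulo $\mathcal{N}_0$. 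Apply Willis's tidying to $\varphi_0$ to obtain a tidy compact open subgroup, then exploit the uniqueness of the $\varphi_0$-canonical pieces (the nub together with the contracting and expanding components $\bigcap_{n\geq 0}\varphi_0^n(\cdot)$ and $\bigcap_{n\geq 0}\varphi_0^{-n}(\cdot)$) to argue that $\mathcal{N}_0$ acts on the set of tidy subgroups for $\varphi_0$ within a bounded orbit in $\mathcal{B}(G)$, from which an averaging/intersection operation produces a tidy subgroup $W$ for $\varphi_0$ which is stable under $\mathcal{N}_0$. Finally, replace $\mathcal{N}_0$ by a normal $\mathcal{N}_0'\supseteq\mathcal{N}_0$ contained in the stabilizer of $W$ and strictly reducing the complexity of the quotient, and $V$ by $W$; apply the inductive hypothesis.

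The main obstacle, for which the Shalom--Willis machinery in~\cite{polycyclicGs=virt-flat} is designed, lies in the inductive step: both the averaging argument that makes $\mathcal{N}_0$ stabilize $W$, and the accounting that ensures complexity strictly decreases when passing from $(\mathcal{N}_0,V)$ to $(\mathcal{N}_0',W)$. The accounting is particularly subtle because if $s_G(\varphi_0)>1$ then $\varphi_0$ itself is not in the scale-one subgroup, so adjoining it to $\mathcal{N}_0$ is not compatible with the hypothesis of the theorem; one must instead identify a canonical normal scale-one subgroup arising from the Levi-type decomposition of $\langle\mathcal{N}_0,\varphi_0\rangle$ to produce the required reduction of complexity while preserving stabilization of a compact open subgroup.
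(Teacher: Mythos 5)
First, a point of comparison: the paper does not prove this theorem at all. It is explicitly imported from the Shalom--Willis preprint \cite{polycyclicGs=virt-flat} (``restated here as Theorem~\ref{thm:nilpotent=flat}''), so there is no in-paper argument to match your proposal against; the relevant question is whether your sketch is itself a proof. It is not. What you have written is a plausible high-level plan in the spirit of the Shalom--Willis machinery --- induction on Hirsch length, reduction modulo the scale-one normal subgroup, Willis's tidying procedure for a single central element --- and your base case is fine (the orbit $\mathcal{H}_0.V$ is finite since $\mathcal{N}_0$ fixes $V$ and has finite index, so $O=\bigcap_{\varphi\in\mathcal{H}_0}\varphi(V)$ is compact, open and $\mathcal{H}_0$-invariant, giving flatness of rank $0$; and $s_G(\varphi^n)=s_G(\varphi)^n$ does force $\mathcal{H}_0=\mathcal{H}_0(1)$ when the quotient is torsion). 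But the entire content of the theorem sits in the two steps you flag as ``the main obstacle'' and then leave unproved.

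Concretely: (i) conjugation by $\nu\in\mathcal{N}_0$ carries a subgroup tidy for $\varphi_0$ to one tidy for $\nu\varphi_0\nu^{-1}$, which lies in $\varphi_0\mathcal{N}_0$ but need not equal $\varphi_0$, so it is not even clear that $\mathcal{N}_0$ acts on the set of $\varphi_0$-tidy subgroups; and the ``averaging/intersection'' of a bounded but infinite family of compact open subgroups producing an open invariant one is a genuine theorem (essentially Schlichting's result \cite{Op:perio-Stab}, cf.\ Proposition~5 of \cite{direction(aut(tdlcG))}), not an operation you may invoke in one clause. (ii) More seriously, the induction does not close as set up: to decrease the Hirsch length you must absorb $\varphi_0$ into the new normal subgroup $\mathcal{N}_0'$, but if $s_G(\varphi_0)>1$ then $\varphi_0$ stabilizes no compact open subgroup whatsoever, so no enlargement of $\mathcal{N}_0$ containing $\varphi_0$ can satisfy the inductive hypothesis. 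Your proposed repair (``a canonical normal scale-one subgroup arising from the Levi-type decomposition'') is exactly the hard technical core of \cite{polycyclicGs=virt-flat} and is not supplied. As it stands the proposal is an honest roadmap with the decisive steps missing, not a proof.
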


Combining 
Theorems~\ref{thm:lf-polygrow-orbits=>fg-nilp-mod-Stab(cpop<G)} and~\ref{thm:nilpotent=flat} 
we conclude that 
the subgroup $\mathcal{H}_0$ 
found in Theorem~\ref{thm:lf-polygrow-orbits=>fg-nilp-mod-Stab(cpop<G)} 
is flat. 
Thus, 
we derive 
the following theorem,  
modulo 
the claim 
contained 
therein 
on the rank 
of $\mathcal{H}_0$. 
That theorem 
delivers 
the same conclusion 
as our desired result 
from weaker hypotheses 
and 
will thus 
prove 
our Main Theorem, 
Theorem~\ref{thm:MainThm_weak-form}. 

\begin{theorem}[geometric characterization of flatness if there is a proper orbit]
\label{thm:MainThm}
Let $G$ be 
a totally disconnected, locally compact group 
and 
$\mathcal{H}\leqslant \Aut G$.  
Assume that 
there is 
an $O$ in $\mathcal{B}(G)$ 
such that 
the orbit $\mathcal{H}.O$ 
is 
proper, 
coarsely connected  
and 
of polynomial growth 
of degree $n$ 
but not $n-1$.  
%
Then 
$\mathcal{H}$ 
has a subgroup $\mathcal{H}_0$ 
of finite index 
that is flat 
of rank $n$.  
\end{theorem}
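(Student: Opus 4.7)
The plan is straightforward. The existence of a finite-index flat subgroup is delivered almost verbatim by combining Theorem~\ref{thm:lf-polygrow-orbits=>fg-nilp-mod-Stab(cpop<G)} with Theorem~\ref{thm:nilpotent=flat}; the only new point is pinning down that its rank equals $n$.

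First I would apply Theorem~\ref{thm:lf-polygrow-orbits=>fg-nilp-mod-Stab(cpop<G)} to $(G,\mathcal{H},O)$, whose hypotheses are exactly those assumed here. This yields $\mathcal{H}_0 \leqslant \mathcal{H}$ of finite index, $\mathcal{N}_0 \normaleq \mathcal{H}_0$, and $V \in \mathcal{B}(G)$ such that $\mathcal{N}_0$ stabilizes $V$ and $\mathcal{H}_0/\mathcal{N}_0$ is a finitely generated nilpotent group. Theorem~\ref{thm:nilpotent=flat} then immediately asserts that $\mathcal{H}_0$ is flat of some finite rank $r$.

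Second, I would identify $r$ with $n$ by a growth comparison. Theorem~\ref{thm:norm>flat-gp} tells us that every orbit of the flat group $\mathcal{H}_0$ in $\mathcal{B}(G)$ is quasi-isometric to $\mathbb{R}^r$, and hence has polynomial growth of degree exactly $r$ (and not $r-1$). On the other hand, choosing coset representatives $\varphi_1,\ldots,\varphi_k$ for $\mathcal{H}_0$ in $\mathcal{H}$ decomposes $\mathcal{H}.O = \bigcup_{i=1}^{k} \mathcal{H}_0.\varphi_i(O)$ into finitely many $\mathcal{H}_0$-orbits, all of which are $(1,\epsilon)$-quasi-isometric to $\mathcal{H}_0.O$ by Lemma~\ref{lem:comparison(orbits(Gs(Isoms)))}. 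Counting points in a ball around a basepoint of $\mathcal{H}_0.O$ then sandwiches the growth function of $\mathcal{H}.O$ between that of a single $\mathcal{H}_0$-orbit and $k$ shifted copies thereof, forcing them to share the same polynomial degree. Since $\mathcal{H}.O$ has degree $n$ but not $n-1$ by hypothesis, we obtain $r=n$.

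The one step that requires a little care is this last growth-degree comparison: a generic quasi-isometry only preserves growth type up to polynomial equivalence, so one must exploit the sharp $(1,\epsilon)$-quality of the quasi-isometries supplied by Lemma~\ref{lem:comparison(orbits(Gs(Isoms)))} to transfer the \emph{precise} polynomial degree, and not merely the growth type, between orbits. Once that is in hand, the argument is simply an assembly of the quoted results.
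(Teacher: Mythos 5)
Your proposal is correct and follows essentially the same route as the paper: existence of the finite-index flat subgroup from Theorem~\ref{thm:lf-polygrow-orbits=>fg-nilp-mod-Stab(cpop<G)} combined with Theorem~\ref{thm:nilpotent=flat}, and identification of the rank by comparing growth degrees via Lemma~\ref{lem:comparison(orbits(Gs(Isoms)))} and part~3 of Theorem~\ref{thm:norm>flat-gp}. Your explicit decomposition of $\mathcal{H}.O$ into finitely many $\mathcal{H}_0$-orbits merely spells out the paper's one-line finite-index growth comparison, so there is no substantive difference.
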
 

The strong form 
of our main theorem, 
Theorem~\ref{thm:MainThm}, 
and 
the remark 
in the paragraph 
preceding Theorem~\ref{thm:lf-polygrow-orbits=>fg-nilp-mod-Stab(cpop<G)}
imply 
the following corollary. 

\begin{corollary}
Let $G$ be 
a totally disconnected, locally compact group. 
A group of automorphisms 
of $G$ 
whose orbits 
in $\mathcal{B}(G)$ 
are quasi-flats 
of finite dimension 
is virtually flat 
if and only if 
it has a proper orbit 
in $\mathcal{B}(G)$. 
\end{corollary}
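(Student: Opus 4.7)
The plan is to derive the corollary directly from Theorem~\ref{thm:MainThm} in one direction and from part~3 of Theorem~\ref{thm:norm>flat-gp} in the other; on each side, properness of an orbit is exactly the piece of data that bridges the quasi-flat hypothesis and virtual flatness.

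For the implication that a proper quasi-flat orbit implies virtual flatness, I would assume that some orbit $\mathcal{H}.O$ in $\mathcal{B}(G)$ is proper and that every orbit of $\mathcal{H}$ is quasi-isometric to $\mathbb{R}^n$. Since coarse connectedness is invariant under quasi-isometry (as recorded in Section~\ref{sec:proof(MainThm)-outline}), $\mathcal{H}.O$ is coarsely connected. The polynomial-growth condition of degree $n$ but not $n-1$ transfers from $\mathbb{R}^n$ to $\mathcal{H}.O$ along the quasi-isometry, as one sees by comparing the cardinalities of maximal $\epsilon$-separated subsets of metric balls on the two sides. With these three hypotheses verified, Theorem~\ref{thm:MainThm} applies directly and delivers a flat subgroup of $\mathcal{H}$ of finite index and rank $n$.

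For the converse, I would let $\mathcal{H}_0$ be a flat finite-index subgroup of $\mathcal{H}$ and choose a compact open subgroup $O$ that is minimizing for $\mathcal{H}_0$. By part~3 of Theorem~\ref{thm:norm>flat-gp}, the orbit map $\varphi\mapsto\varphi(O)$ descends to an isometric bijection between $\mathcal{H}_0/\mathcal{H}_0(1)$, equipped with the norm $\|\cdot\|_{\mathcal{H}_0}$, and the orbit $\mathcal{H}_0.O$. Since $\mathcal{H}_0/\mathcal{H}_0(1)$ is a finitely generated free abelian group, it embeds as a lattice in the finite-dimensional normed space $\mathbb{R}\otimes \mathcal{H}_0/\mathcal{H}_0(1)$, so its image in $\mathcal{B}(G)$ is uniformly discrete and hence $\mathcal{H}_0.O$ is a proper metric subspace. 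Writing $\mathcal{H}=\bigcup_{i=1}^{k}\varphi_i \mathcal{H}_0$ as a union of cosets yields $\mathcal{H}.O=\bigcup_{i=1}^{k}\varphi_i(\mathcal{H}_0.O)$, a finite union whose summands are isometric to $\mathcal{H}_0.O$ via the isometries $\varphi_i$ of $\mathcal{B}(G)$. A pigeonhole argument on bounded sequences then shows that a finite union of proper subspaces of a metric space is proper, so $\mathcal{H}.O$ itself is proper, providing the required proper orbit of $\mathcal{H}$.

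The main technical bookkeeping, and plausibly the only step that is not a pure citation, is the transport of the polynomial-growth condition across the quasi-isometry in the first direction; this is standard but must be stated carefully to match the degree. Everything else is an assembly of Theorem~\ref{thm:MainThm} and Theorem~\ref{thm:norm>flat-gp} together with the elementary observation about finite unions of proper subspaces.
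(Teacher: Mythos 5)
Your proposal is correct and follows essentially the same route as the paper: the forward direction is exactly the application of Theorem~\ref{thm:MainThm} (after noting that coarse connectedness and the growth degree are quasi-isometry invariants transported from $\mathbb{R}^n$), and the converse is the paper's remark preceding Theorem~\ref{thm:lf-polygrow-orbits=>fg-nilp-mod-Stab(cpop<G)}, namely that part~3 of Theorem~\ref{thm:norm>flat-gp} makes the orbit of a minimizing subgroup under the flat finite-index subgroup proper, combined with the observation that a finite union of proper orbits is proper. No gaps; the details you supply (lattice in a normed space, pigeonhole on the coset decomposition) are the ones the paper leaves implicit.
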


The proof 
of the existence 
of the flat subgroup $\mathcal{H}_0$ 
in Theorem~\ref{thm:MainThm} 
is not constructive. 
Nevertheless, 
$\mathcal{H}$ has 
a maximal normal flat subgroup, 
whose elements 
are characterized 
intrinsically 
in the next theorem.

\begin{theorem}[bounded conjugacy classes max. flat in a virtually flat group]
\label{thm:bounded-conjugacy-classe=max-normal-flat}
Suppose that 
$\mathcal{H}$ is 
a group 
of automorphisms 
of a totally disconnected, locally compact group 
that is 
virtually flat 
of finite rank. 
Then 
the subset of $\mathcal{H}$ 
defined by 
$
\mathcal{H}_{{FC}_d}:=\{\varphi\in\mathcal{H}\colon \varphi^{\mathcal{H}}\ \text{is bounded}\}
$ 
is a flat, 
normal subgroup 
of $\mathcal{H}$ 
of finite index 
that contains 
each flat 
subgroup 
of finite index.  
\end{theorem}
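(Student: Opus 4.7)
The plan is to verify the five assertions of the theorem in order: (A) $\mathcal{H}_{FC_d}$ is a subgroup of $\mathcal{H}$, (B) normal in $\mathcal{H}$, (C) of finite index, (D) containing every flat subgroup of finite index, and (E) itself flat; the final step will be the main obstacle. The first move is to rephrase the defining condition via the isometry $d(\eta\varphi\eta^{-1}.V,V)=d(\varphi.\eta^{-1}V,\eta^{-1}V)$ furnished by the isometric action of $\mathcal{H}$ on $\mathcal{B}(G)$: the set $\varphi^{\mathcal{H}}.V$ is bounded precisely when the displacement function $W\mapsto d(\varphi.W,W)$ is bounded on the orbit $\mathcal{H}.V$. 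With this rephrasing, (A) follows from the triangle inequality applied to displacements, inverse-closure from $d(\varphi^{-1}.W,W)=d(W,\varphi.W)$, and (B) from the $\mathcal{H}$-invariance of $\mathcal{H}.V$.

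For (C) and (D), I fix a flat subgroup $\mathcal{H}_0\leqslant\mathcal{H}$ of finite index and pass to its normal core in $\mathcal{H}$, which is still flat because every subgroup of a flat group is flat with the same minimizing subgroup. Let $O$ be minimizing for $\mathcal{H}_0$. By Theorem~\ref{thm:norm>flat-gp}(3) each $\varphi\in\mathcal{H}_0$ has constant displacement $\|\varphi\mathcal{H}_0(1)\|_{\mathcal{H}_0}$ on $\mathcal{H}_0.O$, and since $\mathcal{H}.O$ decomposes into finitely many $\mathcal{H}_0$-orbits lying at pairwise bounded Hausdorff distance by Lemma~\ref{lem:comparison(orbits(Gs(Isoms)))}, $\varphi$ has bounded displacement on all of $\mathcal{H}.O$. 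Hence $\mathcal{H}_0\subseteq\mathcal{H}_{FC_d}$, proving (C); the same reasoning applied to any flat finite-index subgroup of $\mathcal{H}$ yields (D).

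The hard step is (E). The technical heart is the claim that every $\varphi\in\mathcal{H}_{FC_d}$ acts trivially by conjugation on $\bar\mathcal{H}_0:=\mathcal{H}_0/\mathcal{H}_0(1)\cong\mathbb{Z}^n$. If $\varphi$ acted via $A\in\mathrm{GL}_n(\mathbb{Z})$ with $(A-I)\bar\psi\neq 0$ for some $\psi\in\mathcal{H}_0$, then $\gamma_k:=\psi^{-k}\varphi\psi^k\varphi^{-1}\in\mathcal{H}_0$ would have class $k(A-I)\bar\psi$ in $\bar\mathcal{H}_0$, so $d(\gamma_k.O,O)=\|\overline{\gamma_k}\|_{\mathcal{H}_0}$ would grow linearly in $k$; but $\gamma_k.O$ lies in the bounded set $\varphi^{\mathcal{H}}.(\varphi^{-1}.O)$, a contradiction. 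Consequently $[\mathcal{H}_{FC_d},\mathcal{H}_0]\subseteq\mathcal{H}_0(1)$, so $\mathcal{H}_0(1)\trianglelefteq\mathcal{H}_{FC_d}$ and $\mathcal{H}_{FC_d}/\mathcal{H}_0(1)$ is a central extension of the finite group $F:=\mathcal{H}_{FC_d}/\mathcal{H}_0$ by $\mathbb{Z}^n$. I would then invoke Theorem~\ref{thm:nilpotent=flat} with $\mathcal{N}_0=\mathcal{H}_0(1)$: the remaining subtlety, which I expect to be the main obstacle, is verifying that $F$ is nilpotent; this I would attack by extending the same displacement-contradiction argument to commutators of pairs of elements of $\mathcal{H}_{FC_d}$, aiming to force $F$ to be abelian so that the central extension is nilpotent of class at most two.
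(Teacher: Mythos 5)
Your steps (A)--(D) are sound and essentially reproduce the paper's Proposition~\ref{prop:Props(H_FCd)} and Lemma~\ref{lem:1_bounded-conjugacy-classe=max-normal-flat} (the paper writes $\varphi_0^{\mathcal{H}}=\bigcup_i\alpha_i\varphi_0^{\mathcal{H}_0}\alpha_i^{-1}$ instead of your displacement rephrasing, but the content is the same). Your step (E) is where the genuine gap lies, and it is exactly at the point you flag. The displacement-contradiction argument showing $[\mathcal{H}_{FC_d},\mathcal{H}_0]\subseteq\mathcal{H}_0(1)$ is correct and is a nice, genuinely different idea from the paper's. But the plan to finish by ``forcing $F$ to be abelian'' cannot work, because $F=\mathcal{H}_{FC_d}/\mathcal{H}_0$ need not be abelian or even nilpotent: take for $\mathcal{H}$ a flat group together with a finite non-nilpotent group of automorphisms stabilizing $O$ and commuting with everything; all of its elements have bounded (indeed trivial) displacement everywhere, so they lie in $\mathcal{H}_{FC_d}$, yet the given flat finite-index subgroup $\mathcal{H}_0$ may miss them entirely, and then $F$ contains a non-nilpotent finite group. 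No displacement argument can detect this, since the offending elements displace nothing. So Theorem~\ref{thm:nilpotent=flat} cannot be applied with $\mathcal{N}_0=\mathcal{H}_0(1)$.

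The good news is that what you have already proved suffices, by a different finish: $Q:=\mathcal{H}_{FC_d}/\mathcal{H}_0(1)$ is centre-by-finite, so by Schur's theorem $[Q,Q]$ is finite; the preimage $T$ in $Q$ of the torsion subgroup of $Q/[Q,Q]$ is then a finite normal subgroup with $Q/T\cong\mathbb{Z}^n$. Taking $\mathcal{N}_0$ to be the preimage of $T$ in $\mathcal{H}_{FC_d}$, the group $\mathcal{N}_0$ contains $\mathcal{H}_0(1)=(\mathcal{H}_0)_O$ with finite index, so $\mathcal{N}_0.O$ is finite and $V:=\bigcap_{\nu\in\mathcal{N}_0}\nu(O)$ is a compact open subgroup stabilized by $\mathcal{N}_0$, and now Theorem~\ref{thm:nilpotent=flat} applies with abelian quotient $\mathbb{Z}^n$. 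You should replace your last step by this (or something equivalent). For comparison, the paper's Lemma~\ref{lem:2_bounded-conjugacy-classe=max-normal-flat} takes an entirely different route: it completes $\mathcal{H}_{FC_d}$ to a totally disconnected, locally compact group acting on a graph built from a proper, coarsely connected orbit, uses M\"oller's theorem to see that this completion is a compactly generated $\overline{FC}$-group, and invokes the Grosser--Moskowitz structure theorem to produce a compact normal subgroup with finitely generated abelian quotient. Note that the paper's route needs a proper, coarsely connected orbit (which it extracts from virtual flatness of finite rank), whereas your centralization argument only uses the flat finite-index subgroup directly; if the Schur-type finish is supplied, your approach is arguably more elementary and self-contained.
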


\section{Proof of 
the geometric flatness criterion 
if 
there is 
a proper orbit}
\label{sec:pf(MainThm)}

We follow 
the outline 
of the proof 
given in 
the last section. 

\begin{proof}[of Theorem~\ref{thm:lf-polygrow-orbits=>fg-nilp-mod-Stab(cpop<G)}]
In the first step 
of the proof,  
the assumptions 
on the orbit $\mathcal{H}.O$ 
are used 
to define 
a locally compact 
completion, 
$H$,  
of $\mathcal{H}$.  
Choose $d>0$ 
such that 
$\mathcal{H}.O$ is 
$d$-connected 
and 
let $X$ be 
the metric graph 
whose set of vertices 
is $\mathcal{H}.O$ 
and 
whose edges 
connect 
precisely those 
pairs of points 
in $\mathcal{H}.O$ 
whose 
mutual distance 
is at most $d$. 
The choice 
of $d$ 
guarantees that 
the graph $X$ 
is connected; 
since 
$\mathcal{H}.O$ is proper 
$X$ is also 
locally finite. 
The group $\mathcal{H}$ 
acts 
by graph-automorphisms 
on $X$ 
and 
the induced action 
on the set 
of vertices 
is transitive.

Denote by $H$ 
the closure of 
the group 
of automorphisms 
of $X$ 
induced by $\mathcal{H}$ 
in the group 
of all graph-automorphisms 
of $X$. 
Since 
the graph $X$ 
is connected 
and 
locally finite, 
the group $\Aut X$ 
and 
its closed subgroup $H$ 
are totally disconnected 
and locally compact 
in the topology 
of pointwise convergence; 
indeed, 
the stabilizer 
of any vertex 
is 
a compact, open subgroup, 
which is profinite. 

Furthermore,  
because 
both $\mathcal{H}$ and $H$ 
act transitively 
on the set 
of vertices 
of $X$, 
these groups 
are generated by 
their respective subsets, 
$\mathcal{H}_E$ respectively $H_E$,  
consisting of 
those automorphisms 
that map 
$O$ to 
any of its neighbors 
in $X$. 
%
Now the set 
of elements 
that map $O$ 
to another vertex, 
$O'$ say, 
is a left coset 
of the stabilizer of $O$; 
within 
the group $H$, 
the latter subset 
is compact. 
Since $H_E$ 
is a finite union 
of such sets, 
we conclude that 
it is compact 
also. 
Hence 
the group $H$ 
is totally disconnected, locally compact and compactly generated. 

We claim next 
that 
$H$ has 
polynomial growth. 
This is seen  
as follows. 
The growth 
of $H$ 
with respect to $H_E$ 
equals 
the growth 
of the combinatorial graph $X$. 
For any radius, 
$r$ say, 
the ball 
of radius $r$ 
around $O$ 
in $\mathcal{H}.O$ 
with respect to 
the metric 
of $\mathcal{B}(G)$ 
contains 
the ball 
with radius $r/d$  
with respect to 
the metric 
of $X$. 
Since 
the  former balls 
grow polynomially, 
the latter do also 
and thus 
$H$ has polynomial growth. 

Since 
$H$ has been seen 
to be 
a compactly generated, totally disconnected, locally compact group of polynomial growth, 
we may apply 
\cite[Proposition~1]{struc(G-poly_growth)2}
to $H$,  
to conclude that 
it has 
a maximal compact normal subgroup $C$, 
such that 
$H/C$ is a Lie group. 
As a quotient 
of a totally disconnected, compactly generated group 
of polynomial growth, 
$H/C$ 
is then 
a discrete, finitely generated group 
of polynomial growth. 

By 
Gromov's theorem 
on finitely generated groups of polynomial growth 
(that is, the Main Theorem 
in~\cite{Gs(poly-growth)+expand-maps} 
or 
Corollary~1.6 
in~\cite{new-pf(Gromov'sThm(Gs(polygrowth)))}) 
$H/C$ is virtually nilpotent. 
Let $H_0$ be 
the inverse image 
of a nilpotent subgroup, 
$N$ say,  
of finite index 
in $H/C$ 
under the canonical projection $H\to H/C$,  
let $\mathcal{H}_0$ 
be the inverse image of $H_0\cap \mathcal{H}$ 
under the map $\mathcal{H}\to H$ 
and 
let $\mathcal{N}_0$ be 
the kernel 
of the composite map 
$\mathcal{H}_0\hookrightarrow\mathcal{H}\to H\to H/C$. 

Then 
$\mathcal{H}_0$ has finite index in $\mathcal{H}$, 
$\mathcal{N}_0$ is normal in $\mathcal{H}_0$ 
and $\mathcal{H}_0/\mathcal{N}_0$ 
is a subgroup 
of the finitely generated, nilpotent group $N$. 
Every subgroup of 
a finitely generated nilpotent group 
is finitely generated; 
see, e.g.  \cite[Lemma~2]{rep(fg-nilpGs)}. 
%
Therefore 
$\mathcal{H}_0/\mathcal{N}_0$ 
is a finitely generated nilpotent group. 

The proof 
of Theorem~\ref{thm:lf-polygrow-orbits=>fg-nilp-mod-Stab(cpop<G)} 
will 
therefore 
be complete,  
once we show that 
there exists 
$V\in\mathcal{B}(G)$ 
such that 
$\mathcal{N}_0$ stabilizes $V$. 
To see this, 
we use that 
the group 
of automorphisms    
of the graph $X$ 
induced by 
the subgroup $\mathcal{N}_0$ 
of $\mathcal{H}$ 
is contained 
in $C$ 
by the definition 
of $\mathcal{N}_0$. 
Since 
the group $C$ 
is compact, 
the set 
of images 
of the vertex $O\in X$ 
under $C$ 
is finite. 
The same 
conclusion 
can 
be drawn 
for the subgroup $\mathcal{N}_0$ 
of $C$. 
Therefore 
the subgroup $V:=\bigcap_{n_0\in\mathcal{N}_0} n_0(O)$ 
is a compact, open subgroup 
of $G$. 
By the definition 
of $V$, 
the group $\mathcal{N}_0$ 
stabilizes $V$, 
and 
our proof 
is complete. 
\qed 
\end{proof}

\begin{proof}[of Theorem~\ref{thm:MainThm}] 
As already noted 
in Section~\ref{sec:proof(MainThm)-outline}, 
modulo 
the claim 
contained 
therein 
on the rank 
of $\mathcal{H}_0$,  
Theorem~\ref{thm:MainThm} follows from 
Theorems~\ref{thm:lf-polygrow-orbits=>fg-nilp-mod-Stab(cpop<G)}, 
using Theorem~\ref{thm:nilpotent=flat}. 

To determine 
the rank 
of the subgroup $\mathcal{H}_0$ 
of $\mathcal{H}$ 
we argue 
as follows. 
Applying 
Lemma~\ref{lem:comparison(orbits(Gs(Isoms)))} 
with $\mathbf{B}:=\mathcal{B}(G)$ 
and 
the natural action of $\mathcal{H}$ 
on $\mathbf{B}$ 
we see that 
all orbits 
of $\mathcal{H}$ 
in $\mathcal{B}(G)$ 
have the same 
type of growth, 
which, 
by assumption, 
is polynomial 
of degree~$n$ 
but not 
of degree~$n-1$. 
Since $\mathcal{H}_0$ 
has finite index 
in $\mathcal{H}$, 
the type 
of growth 
of the orbits 
of $\mathcal{H}_0$ 
in $\mathcal{B}(G)$ 
is the same as 
the type 
of growth 
of 
the orbits 
of $\mathcal{H}$ 
in $\mathcal{B}(G)$. 
If we choose 
a minimizing subgroup, 
$V$ say, 
for the flat group $\mathcal{H}_0$, 
whose rank 
is $r$ say, 
then 
Theorem~\ref{thm:norm>flat-gp} 
implies that 
the metric space $\mathcal{H}_0.V$ 
has polynomial growth 
of degree~$r$ 
but not 
of degree~$r-1$. 
We conclude that 
$r$ equals $n$ 
as claimed. 
This concludes 
the proof 
of Theorem~\ref{thm:MainThm}. 
\qed 
\end{proof}

\section{Subgroup of Bounded Conjugacy Classes 
and 
the Proof of Theorem~\ref{thm:bounded-conjugacy-classe=max-normal-flat}}
\label{sec:pf(max-normal-flat-subgroup)}

As already noticed 
in Section~\ref{sec:proof(MainThm)-outline},  
the proof 
of the existence 
of the flat subgroup $\mathcal{H}_0$ 
in Theorem~\ref{thm:MainThm} 
is not constructive. 
To find $\mathcal{H}_0$, 
we relied on 
Proposition~1 in \cite{struc(G-poly_growth)2}
and 
on Gromov's Theorem, 
both of which 
are non-constructive. 

Nevertheless 
Theorem~\ref{thm:bounded-conjugacy-classe=max-normal-flat} 
gives 
a useful characterization 
of 
the maximal, normal flat subgroup of 
a virtually flat group $\mathcal{H}$ 
of automorphisms 
and 
in this section 
we prove 
that theorem. 
The first step 
towards that result 
is the following Lemma. 
Its proof 
is left 
to the reader. 

\begin{lemma}[bornological group structure 
on the automorphism group] 
\label{lem:bornolG-struc(d)}
~\\
Let $G$ be 
a totally disconnected, locally compact group 
and 
$\mathcal{H}$ a group of automorphisms 
of $G$. 
Then 
the collection of 
bounded subsets 
of $\mathcal{H}$ 
is 
a bornological group structure 
on $\mathcal{H}$; 
that is, 
singleton sets are bounded, 
subsets of bounded sets are bounded, 
finite unions of bounded sets are bounded 
and 
products and inverses of bounded sets are bounded. 
\end{lemma}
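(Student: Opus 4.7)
The plan is to exploit the fact that $\mathcal{H}$ acts by isometries on the metric space $(\mathcal{B}(G),d)$; this is immediate from the definition of $d$ since any automorphism of $G$ induces a bijection that preserves the indices appearing in that definition. A small preliminary step is to verify the ``some equivalently every'' claim implicit in the definition of boundedness: if $B.V$ has diameter at most $D$ and $W\in\mathcal{B}(G)$ is any compact, open subgroup, then the triangle inequality combined with the isometry property yields
$$d(\varphi_1(W),\varphi_2(W))\leq d(W,V)+d(\varphi_1(V),\varphi_2(V))+d(V,W)\leq 2d(V,W)+D,$$
so the diameter of $B.W$ is bounded by $D+2d(V,W)$.

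Granted this equivalence, the first three bornology axioms follow with essentially no effort: a singleton $\{\varphi\}$ has orbit $\{\varphi(V)\}$, of diameter zero; the inclusion $B'\subseteq B$ gives $B'.V\subseteq B.V$; and for a finite union $B_1\cup B_2$ with each $B_i.V$ of diameter at most $D_i$, fixing $\varphi_i\in B_i$ and applying the triangle inequality bounds the diameter of $(B_1\cup B_2).V$ by $D_1+D_2+d(\varphi_1(V),\varphi_2(V))$. For products one writes $(B_1B_2).V=B_1.(B_2.V)$; since $B_2.V$ is contained in a ball of finite radius around some $V_0:=\varphi_2(V)$ for a chosen $\varphi_2\in B_2$, and since by the ``some vs every'' equivalence $B_1.V_0$ also has finite diameter, the triangle inequality together with the isometry property produces a uniform bound on $d(\varphi_1(W),\varphi_1'(W'))$ for $\varphi_1,\varphi_1'\in B_1$ and $W,W'\in B_2.V$.

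The main step, and the one requiring a small trick, is the axiom for inverses. Assuming $B$ is nonempty with $B.V$ of diameter at most $D$, pick $\varphi_0\in B$ and set $V':=\varphi_0^{-1}(V)$. For every $\varphi\in B$, applying the isometry $\varphi$ gives
$$d(\varphi^{-1}(V),\varphi_0^{-1}(V))=d(V,\varphi\varphi_0^{-1}(V))=d(V,\varphi(V')).$$
By the ``some vs every'' equivalence, the set $B.V'$ is bounded, and it contains the point $V=\varphi_0(V')$; hence $d(V,\varphi(V'))$ is bounded by the diameter of $B.V'$, so $B^{-1}.V$ lies in a ball of bounded radius around $\varphi_0^{-1}(V)$ and thus has bounded diameter.

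I expect the inverses axiom to be the main conceptual obstacle: boundedness of $B.V$ does not transfer directly to boundedness of $B^{-1}.V$, because the orbit map $\varphi\mapsto\varphi(V)$ is equivariant only for left translation on $\mathcal{H}$, not for inversion. The trick of rewriting $d(\varphi^{-1}(V),\varphi_0^{-1}(V))$ via the isometry of $\varphi$ and then re-reading it as a distance in the orbit $B.V'$ through the alternate reference subgroup $V':=\varphi_0^{-1}(V)$ is what circumvents this asymmetry.
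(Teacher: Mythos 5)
Your proof is correct; the paper leaves this lemma to the reader, and your argument — that $\mathcal{H}$ acts by isometries on $(\mathcal{B}(G),d)$, that boundedness is independent of the base point $V$, and then the triangle inequality for each axiom — is exactly the intended standard verification. For inverses one can shave a line off your trick by noting directly that $d(\varphi^{-1}(V),V)=d(V,\varphi(V))\le d(V,\varphi_0(V))+\operatorname{diam}(B.V)$, but your route through $V':=\varphi_0^{-1}(V)$ is equally valid.
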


The following proposition 
is a consequence 
of Lemma~\ref{lem:bornolG-struc(d)}; 
its proof 
is straightforward 
and 
is left 
to the reader. 

\begin{proposition}[bounded conjugacy classes 
form a normal subgroup]
\label{prop:Props(H_FCd)}
~\\
Let $G$ be 
a totally disconnected, locally compact group 
and 
$\mathcal{H}$ a group of automorphisms 
of $G$. 
Then 
the set 
$
\mathcal{H}_{{FC}_d}:=\{\varphi\in\mathcal{H}\colon \varphi^{\mathcal{H}}\ \text{is bounded}\}
$ 
is a normal subgroup 
of $\mathcal{H}$ 
and 
for any such $\mathcal{H}$ 
we have 
${(\mathcal{H}_{{FC}_d})}_{{FC}_d}=\mathcal{H}_{{FC}_d}$. 
\end{proposition}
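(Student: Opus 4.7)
The plan is to derive all three assertions directly from the bornological group structure on $\mathcal{H}$ recorded in Lemma~\ref{lem:bornolG-struc(d)}. The key observation is that membership of $\varphi$ in $\mathcal{H}_{{FC}_d}$ is, by definition, a statement about boundedness of the conjugacy class $\varphi^{\mathcal{H}}\subseteq \mathcal{H}$, and each of the subgroup, normality, and idempotence claims translates into a statement about how bounded sets behave under one of the basic set-theoretic operations that are axioms of a bornological group.

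First, to show that $\mathcal{H}_{{FC}_d}$ is a subgroup of $\mathcal{H}$, I would verify the three closure conditions. The identity satisfies $\operatorname{id}_G^{\mathcal{H}}=\{\operatorname{id}_G\}$, which is bounded since singletons are. Closure under inversion follows from the set equality $(\varphi^{-1})^{\mathcal{H}}=(\varphi^{\mathcal{H}})^{-1}$ and the axiom that the inverse of a bounded set is bounded. Closure under multiplication follows from the inclusion $(\varphi\psi)^{\mathcal{H}}\subseteq \varphi^{\mathcal{H}}\cdot\psi^{\mathcal{H}}$, which is a consequence of the identity $h\varphi\psi h^{-1}=(h\varphi h^{-1})(h\psi h^{-1})$, combined with the facts that products of bounded sets are bounded and subsets of bounded sets are bounded.

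Second, to verify normality, fix $\varphi\in\mathcal{H}_{{FC}_d}$ and $h\in\mathcal{H}$. Re-indexing $g=g'h^{-1}$ one sees that $(h\varphi h^{-1})^{\mathcal{H}}=\varphi^{\mathcal{H}}$ as subsets of $\mathcal{H}$, so the conjugacy class of $h\varphi h^{-1}$ is bounded whenever that of $\varphi$ is; hence $h\varphi h^{-1}\in\mathcal{H}_{{FC}_d}$.

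Third, for the idempotence $(\mathcal{H}_{{FC}_d})_{{FC}_d}=\mathcal{H}_{{FC}_d}$, the inclusion $\subseteq$ is automatic since $(\mathcal{H}_{{FC}_d})_{{FC}_d}\subseteq \mathcal{H}_{{FC}_d}$ as sets. For $\supseteq$, observe that for $\varphi\in\mathcal{H}_{{FC}_d}$ the conjugacy class of $\varphi$ inside $\mathcal{H}_{{FC}_d}$ is a subset of its conjugacy class $\varphi^{\mathcal{H}}$ inside $\mathcal{H}$, and a subset of a bounded set is bounded. Since the proof is essentially bookkeeping once the bornological axioms are in hand, there is no genuine obstacle; the only care required is to keep straight inside which group one is taking conjugates, and to note that the bornology on $\mathcal{H}$ restricts to the bornology on any subgroup in the sense used here, so that boundedness of $\varphi^{\mathcal{H}_{{FC}_d}}$ has the same meaning whether tested in $\mathcal{H}_{{FC}_d}$ or in $\mathcal{H}$.
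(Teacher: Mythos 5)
Your proof is correct and follows exactly the route the paper intends: the paper leaves this proposition to the reader, stating only that it is a consequence of Lemma~\ref{lem:bornolG-struc(d)}, and your argument derives the subgroup, normality, and idempotence claims precisely from those bornological axioms (using $(\varphi\psi)^{\mathcal{H}}\subseteq\varphi^{\mathcal{H}}\cdot\psi^{\mathcal{H}}$, $(h\varphi h^{-1})^{\mathcal{H}}=\varphi^{\mathcal{H}}$, and $\varphi^{\mathcal{H}_{{FC}_d}}\subseteq\varphi^{\mathcal{H}}$). Your closing remark that boundedness is a property of the set of automorphisms itself, independent of the ambient subgroup in which conjugacy classes are formed, is exactly the right point of care.
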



%
The next proposition 
explains why 
one should expect 
the subgroup 
of bounded conjugacy classes 
of 
a group 
of automorphisms 
to be related to 
flat subgroups.

\begin{proposition}[a flat group 
has 
bounded conjugacy classes]
\label{prop:flatGs-have-bounded-conjugacy-classes} 
~\\
Suppose that 
$\mathcal{H}$ is 
a flat group of automorphisms 
of a totally disconnected, locally compact group. 
Then 
$\mathcal{H}=\mathcal{H}_{{FC}_d}$. 
\end{proposition}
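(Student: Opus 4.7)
The plan is to show that every $\varphi\in\mathcal{H}$ has a bounded conjugacy class by exhibiting a single compact open subgroup $V\in\mathcal{B}(G)$ for which the orbit $\varphi^{\mathcal{H}}.V=\{\psi\varphi\psi^{-1}(V)\colon\psi\in\mathcal{H}\}$ has bounded diameter in $(\mathcal{B}(G),d)$. By the definition of boundedness recalled before Theorem~\ref{thm:lf-polygrow-orbits=>fg-nilp-mod-Stab(cpop<G)}, this implies $\varphi\in\mathcal{H}_{FC_d}$, and since $\varphi$ is arbitrary, we obtain $\mathcal{H}=\mathcal{H}_{FC_d}$.

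The natural choice for $V$ is a minimizing subgroup $O$ for $\mathcal{H}$, which exists because $\mathcal{H}$ is flat. For this $O$, part~3 of Theorem~\ref{thm:norm>flat-gp} furnishes the explicit distance formula
\begin{equation*}
d(\alpha(O),O)=\sum_{\rho\in\Phi_\mathcal{H}}\log(t_\rho)\,|\rho(\alpha)|\qquad(\alpha\in\mathcal{H}).
\end{equation*}
I will apply this with $\alpha=\psi\varphi\psi^{-1}$ for arbitrary $\psi\in\mathcal{H}$. The key observation is that the elements of $\Phi_\mathcal{H}$ are, by part~2 of Theorem~\ref{thm:norm>flat-gp}, homomorphisms from $\mathcal{H}$ into the abelian group $\mathbb{Z}$, and hence are invariant under conjugation: $\rho(\psi\varphi\psi^{-1})=\rho(\varphi)$ for every $\rho\in\Phi_\mathcal{H}$ and every $\psi\in\mathcal{H}$.

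Combining the two previous observations yields $d(\psi\varphi\psi^{-1}(O),O)=d(\varphi(O),O)$ for all $\psi\in\mathcal{H}$, so the entire set $\varphi^{\mathcal{H}}.O$ lies on the sphere of radius $d(\varphi(O),O)$ around $O$ and therefore has diameter at most $2\,d(\varphi(O),O)$ in $(\mathcal{B}(G),d)$. This is precisely what is needed. There is no real obstacle here; the proof reduces to the conjugation-invariance of $\mathbb{Z}$-valued homomorphisms once one has the privilege, guaranteed by flatness, of working with a subgroup $O$ that is simultaneously minimizing for every element of $\mathcal{H}$ and thus makes the closed formula of Theorem~\ref{thm:norm>flat-gp} available for every $\psi\varphi\psi^{-1}$ at once.
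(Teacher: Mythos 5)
Your proof is correct, but it runs through a different part of Theorem~\ref{thm:norm>flat-gp} than the paper does. The paper uses part~1: it writes a conjugate as $\psi\varphi\psi^{-1}=\varphi\cdot[\varphi^{-1},\psi]$, notes that every commutator lies in $\mathcal{H}(1)=\mathcal{H}_O$ because $\mathcal{H}/\mathcal{H}(1)$ is abelian and $\mathcal{H}(1)$ is exactly the stabilizer of the minimizing subgroup $O$, and concludes that $\varphi^{\mathcal{H}}.O\subseteq\{\varphi(O)\}$ is a single point. You instead use part~3, the closed distance formula, together with the conjugation-invariance of the $\mathbb{Z}$-valued homomorphisms $\rho\in\Phi_\mathcal{H}$, to place $\varphi^{\mathcal{H}}.O$ on the sphere of radius $d(\varphi(O),O)$ about $O$. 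Both arguments are one step long and both ultimately exploit that the obstruction to flatness-type behaviour lives in $\mathcal{H}/\mathcal{H}(1)$, which is abelian; the difference is that the paper's version yields the strictly stronger conclusion that the orbit of $O$ under the conjugacy class is a singleton, whereas yours only bounds its diameter by $2\,d(\varphi(O),O)$ --- which is all the proposition requires, so nothing is lost. One small point in your favour: your argument makes visible exactly which data of Theorem~\ref{thm:norm>flat-gp} (the $\rho$ and $t_\rho$, independent of $O$) control the size of the conjugacy class; one small point against: it silently relies on the sum $\sum_{\rho}\log(t_\rho)\,|\rho(\alpha)|$ being finite, which is guaranteed by part~2 of the theorem and is worth citing explicitly.
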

\begin{proof}
Let $\varphi$ be 
an arbitrary element 
of $\mathcal{H}$. 
Choose 
a compact, open subgroup $O$ 
which is 
minimizing for $\mathcal{H}$. 
Then, 
by part~1 
of Theorem~\ref{thm:norm>flat-gp}, 
we have 
\[ 
\varphi^{\mathcal{H}}.O=
{\varphi}\cdot [{\varphi}^{-1},\mathcal{H}].O\subseteq 
{\varphi}\cdot \mathcal{H}(1).O=
{\varphi}\cdot \mathcal{H}_O.O=
{\varphi}(O)\,, 
\]
and 
hence 
$\varphi^{\mathcal{H}}$ 
is 
bounded. 
\qed 
\end{proof}

We now 
turn to 
the proof 
of Theorem~\ref{thm:bounded-conjugacy-classe=max-normal-flat}. 
That proof 
will consist of 
two steps; 
(Step~1)
under the assumptions 
of Theorem~\ref{thm:bounded-conjugacy-classe=max-normal-flat} 
it will be shown 
in Lemma~\ref{lem:1_bounded-conjugacy-classe=max-normal-flat} 
that 
$\mathcal{H}_{{FC}_d}$ 
is a normal subgroup 
of $\mathcal{H}$ 
that 
contains 
every flat 
subgroup 
of finite index; 
%
(Step~2) 
Lemma~\ref{lem:2_bounded-conjugacy-classe=max-normal-flat}  
applied with 
$\mathcal{H}_{\text{\textit{fcd}}}:=\mathcal{H}_{{FC}_d}$
shows that 
$\mathcal{H}_{{FC}_d}$ is flat. 

\begin{lemma}\label{lem:1_bounded-conjugacy-classe=max-normal-flat} 
Suppose that 
$\mathcal{H}$ is 
a group 
of automorphisms 
of a totally disconnected, locally compact group 
that is 
virtually flat. 
Then 
$\mathcal{H}_{{FC}_d}$ 
is a normal subgroup 
of $\mathcal{H}$ 
that 
contains 
every flat 
subgroup 
of finite index. 
In particular, 
$\mathcal{H}_{{FC}_d}$ has finite index 
in $\mathcal{H}$. 
\end{lemma}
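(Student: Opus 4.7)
Normality of $\mathcal{H}_{{FC}_d}$ in $\mathcal{H}$ is already supplied by Proposition~\ref{prop:Props(H_FCd)}, so the substantive content of the lemma is the containment $\mathcal{K}\subseteq \mathcal{H}_{{FC}_d}$ for an arbitrary flat finite-index subgroup $\mathcal{K}\leqslant\mathcal{H}$; the \emph{in particular} clause will then be immediate, since virtual flatness of $\mathcal{H}$ produces at least one such $\mathcal{K}$, forcing $[\mathcal{H}:\mathcal{H}_{{FC}_d}]\leqslant [\mathcal{H}:\mathcal{K}]<\infty$. The plan for the containment is to reduce the $\mathcal{H}$-conjugacy class of an element of $\mathcal{K}$ to a finite union of conjugates of its $\mathcal{K}$-conjugacy class and then to invoke Proposition~\ref{prop:flatGs-have-bounded-conjugacy-classes} applied to the flat group $\mathcal{K}$.

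Concretely, I would fix a complete set of right coset representatives $h_1,\dots,h_n$ of $\mathcal{K}$ in $\mathcal{H}$, so that $\mathcal{H}=\bigsqcup_{i=1}^n \mathcal{K} h_i$. For $\varphi\in\mathcal{K}$ and any $h=k h_i\in\mathcal{H}$ with $k\in\mathcal{K}$, a direct computation yields $h^{-1}\varphi h = h_i^{-1}(k^{-1}\varphi k)h_i$, and since $k^{-1}\varphi k$ runs through $\varphi^{\mathcal{K}}$ as $k$ runs through $\mathcal{K}$, one obtains the identity
\[
\varphi^{\mathcal{H}} \;=\; \bigcup_{i=1}^{n}\; h_i^{-1}\,\varphi^{\mathcal{K}}\,h_i\,.
\]
Proposition~\ref{prop:flatGs-have-bounded-conjugacy-classes}, applied to the flat group $\mathcal{K}$, tells us that $\varphi^{\mathcal{K}}$ is a bounded subset of $\Aut G$. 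Lemma~\ref{lem:bornolG-struc(d)} then finishes the job: each set $h_i^{-1}\varphi^{\mathcal{K}}h_i$ is a product of the bounded singletons $\{h_i^{-1}\}$ and $\{h_i\}$ with the bounded set $\varphi^{\mathcal{K}}$, hence bounded, and the finite union of bounded sets is bounded. Therefore $\varphi^{\mathcal{H}}$ is bounded, $\varphi\in \mathcal{H}_{{FC}_d}$, and $\mathcal{K}\subseteq \mathcal{H}_{{FC}_d}$.

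I do not expect any real obstacle in this argument; the only point requiring mild care is the use of \emph{right} coset representatives rather than left ones, so that the inner factor $k^{-1}\varphi k$ stays inside $\mathcal{K}$ and one may legitimately apply Proposition~\ref{prop:flatGs-have-bounded-conjugacy-classes} to $\varphi^{\mathcal{K}}$ instead of to a conjugacy class in some larger group where boundedness has not yet been established.
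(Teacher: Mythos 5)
Your proposal is correct and follows essentially the same route as the paper: normality is delegated to Proposition~\ref{prop:Props(H_FCd)}, the conjugacy class of an element of the flat finite-index subgroup is decomposed over finitely many coset representatives into conjugates of its conjugacy class in that subgroup, boundedness of the latter comes from Proposition~\ref{prop:flatGs-have-bounded-conjugacy-classes}, and the bornological closure properties of Lemma~\ref{lem:bornolG-struc(d)} finish the argument. The only difference is the immaterial choice of right versus left coset representatives (matching the conjugation convention), so there is nothing substantive to add.
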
 
\begin{proof}
%
%
%
%
Taking 
Proposition~\ref{prop:Props(H_FCd)} 
into account, 
we only need to show 
that 
$\mathcal{H}_{{FC}_d}$ contains 
every flat 
subgroup 
of finite index. 
Let $\mathcal{H}_0$ be 
one such subgroup 
of $\mathcal{H}$ 
and 
$\varphi_0$ an arbitrary element 
of $\mathcal{H}_0$. 
The argument 
of Proposition~\ref{prop:flatGs-have-bounded-conjugacy-classes} 
shows that 
the set $\varphi_0^{\mathcal{H}_0}$ 
is 
a bounded subset 
of $\mathcal{H}_0$ 
and hence 
of $\mathcal{H}$. 
Write 
$\mathcal{H}$ as 
$\bigcup_{i=1}^k \alpha_i\mathcal{H}_0$.  
Then 
$
\varphi_0^{\mathcal{H}}=\bigcup_{i=1}^k \alpha_i{\varphi_0^{\mathcal{H}_0}}\alpha_i^{-1}
$ 
is a finite union 
of bounded subsets 
of $\mathcal{H}$, 
and hence 
is bounded. 
We conclude 
that 
$\varphi_0$ is contained 
in $\mathcal{H}_{{FC}_d}$, 
which proves 
the claim. 
\qed 
\end{proof}

\begin{lemma}\label{lem:2_bounded-conjugacy-classe=max-normal-flat}  
Let $G$ be 
a totally disconnected, locally compact group 
and 
$\mathcal{H}_{\text{\textit{fcd}}}\leqslant \Aut G$.  
Assume that 
there is 
an $O$ in $\mathcal{B}(G)$
such that 
the orbit $\mathcal{H}_{\text{\textit{fcd}}}.O$ 
is proper 
and 
coarsely connected  
(this holds e.g. if 
$\mathcal{H}_{\text{\textit{fcd}}}$ is 
virtually flat 
of finite rank). 
Assume further that 
$\mathcal{H}_{\text{\textit{fcd}}}=({\mathcal{H}_{\text{\textit{fcd}}}})_{{FC}_d}$. 
Then 
there is $\mathcal{N}_0\normaleq \mathcal{H}_{\text{\textit{fcd}}}$ 
and 
$V\in\mathcal{B}(G)$ 
such that 
$\mathcal{N}_0$ stabilizes $V$ 
and 
$\mathcal{H}_{\text{\textit{fcd}}}/\mathcal{N}_0$ is 
a finitely generated abelian group. 
In particular, 
$\mathcal{H}_{\text{\textit{fcd}}}$ is flat 
of finite rank. 
\end{lemma}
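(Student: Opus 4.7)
The plan is to follow the graph-theoretic setup from the proof of Theorem~\ref{thm:lf-polygrow-orbits=>fg-nilp-mod-Stab(cpop<G)} and then exploit the additional ${FC}_d$ hypothesis to strengthen ``finitely generated nilpotent'' to ``finitely generated abelian''. I would choose $d>0$ so that $\mathcal{H}_{\text{\textit{fcd}}}.O$ is $d$-connected, form the metric graph $X$ on vertex set $\mathcal{H}_{\text{\textit{fcd}}}.O$ with edges joining pairs at $\mathcal{B}(G)$-distance at most $d$, and let $H$ be the closure in $\Aut{X}$ of the image of $\mathcal{H}_{\text{\textit{fcd}}}$. Properness gives local finiteness of $X$, coarse connectedness gives connectedness, and just as in the earlier proof $H$ is a compactly generated, totally disconnected, locally compact group acting vertex-transitively on $X$.

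The geometric content of the ${FC}_d$ hypothesis is the crux. Using the isometry identity $d(\psi\varphi\psi^{-1}(O),O)=d(\varphi(\psi^{-1}(O)),\psi^{-1}(O))$ together with vertex-transitivity of $\mathcal{H}_{\text{\textit{fcd}}}$ on $X$, boundedness of $\varphi^{\mathcal{H}_{\text{\textit{fcd}}}}$ is equivalent to $\varphi$ having bounded displacement $M(\varphi)<\infty$ on $X$; the same calculation applied to an arbitrary $g\in H$ yields $d(g\varphi g^{-1}(O),O)\leq M(\varphi)$, so $\varphi^{H}.O$ lies in the finite ball $B(O,M(\varphi))$ and $\varphi^{H}$ is contained in finitely many cosets of the compact stabiliser $\mathrm{Stab}_{H}(O)$, hence is relatively compact in $H$. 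Since $\mathcal{H}_{\text{\textit{fcd}}}$ is dense in $H$ this forces $H$ to be an $FC^{-}$-group, hence SIN, hence to admit a compact open normal subgroup $C$. The quotient $H/C$ is discrete, finitely generated, and inherits the finite-conjugacy-class property (relatively compact subsets of $H$ project to finite subsets of the discrete $H/C$), so $H/C$ is a finitely generated $FC$-group. By B.~H.~Neumann's theorem $[H/C,H/C]$ is finite, and its preimage $K$ in $H$ is a compact open normal subgroup of $H$ with $H/K$ finitely generated abelian.

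To conclude, set $\mathcal{N}_{0}:=\ker\bigl(\mathcal{H}_{\text{\textit{fcd}}}\to H/K\bigr)$: density of $\mathcal{H}_{\text{\textit{fcd}}}$ together with openness of $K$ give $\mathcal{H}_{\text{\textit{fcd}}}/\mathcal{N}_{0}\cong H/K$, which is finitely generated abelian, and because $\mathcal{N}_{0}$ maps into the compact group $K$ the orbit $\mathcal{N}_{0}.O$ is finite, so $V:=\bigcap_{\eta\in\mathcal{N}_{0}}\eta(O)$ is a finite intersection of compact open subgroups of $G$, itself a member of $\mathcal{B}(G)$, and is stabilised by $\mathcal{N}_{0}$ by construction. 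The final flatness statement then follows from Theorem~\ref{thm:nilpotent=flat} applied with $\mathcal{H}_{0}:=\mathcal{H}_{\text{\textit{fcd}}}$, viewing the abelian group $\mathcal{H}_{\text{\textit{fcd}}}/\mathcal{N}_{0}$ as nilpotent of class one.

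The main obstacle is the passage from the ${FC}_d$ property of the dense subgroup $\mathcal{H}_{\text{\textit{fcd}}}$ to the $FC^{-}$ property of its closure $H$: pointwise limits of bounded-displacement isometries need not themselves have bounded displacement, so this step is not automatic. Either one supplies a direct argument using the factorisation $H=\mathcal{H}_{\text{\textit{fcd}}}\cdot\mathrm{Stab}_{H}(O)$ to control the conjugation of a general $h\in H$, or one first derives polynomial growth of $X$ from the vertex-transitive action by bounded-displacement isometries and then applies Losert's and Gromov's theorems as in the proof of Theorem~\ref{thm:lf-polygrow-orbits=>fg-nilp-mod-Stab(cpop<G)}, performing the final upgrade from virtually nilpotent to finitely generated abelian via B.~H.~Neumann's theorem.
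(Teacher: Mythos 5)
Your setup and your reduction are the same as the paper's, and you have correctly located the one genuinely hard point --- but you have not closed it. You show (correctly, via the isometry identity and properness) that every element in the image of $\mathcal{H}_{\text{\textit{fcd}}}$ in $H\leqslant\Aut X$ has relatively compact conjugacy class \emph{in $H$}. What remains is to conclude that \emph{every} element of the closure $H$ has this property, and as you yourself note, pointwise limits of bounded-displacement automorphisms need not have bounded displacement, so density alone does not suffice. The paper closes exactly this gap by invoking Theorem~2 of M\"oller's paper \cite{FC-ele<tdGs&auts(infGphs)}, which says that in a totally disconnected, locally compact group the set of $\overline{FC}$-elements is a \emph{closed} subgroup; since that closed subgroup contains the dense image of $\mathcal{H}_{\text{\textit{fcd}}}$, it is all of $H$. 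Neither of your two proposed escape routes is carried out, and the second one is not viable as stated: polynomial growth of the orbit is not a hypothesis of this lemma and does not obviously follow from the ${FC}_d$ condition, and even granting it, Losert plus Gromov only yields a virtually nilpotent quotient, so the upgrade to abelian would again require the $\overline{FC}$-structure of $H$ that you have not established.

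Once $H$ is known to be a compactly generated, totally disconnected, locally compact $\overline{FC}$-group, your continuation (SIN, hence a compact open normal subgroup; discrete finitely generated $FC$-quotient; B.~H.~Neumann to get finite commutator subgroup; pass to the preimage $K$) is a legitimate variant of the paper's single citation of Theorem~3.20 in \cite{cpness-conds<topGs}, which directly produces a compact normal subgroup with discrete, finitely generated, torsion-free abelian quotient; the two routes prove essentially the same structure theorem, yours more explicitly. Your final step --- defining $\mathcal{N}_0$ as a kernel, intersecting the finite orbit $\mathcal{N}_0.O$ to produce $V$, and applying Theorem~\ref{thm:nilpotent=flat} --- matches the paper. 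So the proposal is incomplete rather than wrong: insert M\"oller's theorem (or an equivalent closedness argument for the $\overline{FC}$-subgroup) and it becomes the paper's proof.
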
 
\begin{proof}
We first 
verify 
that 
the condition 
that 
$\mathcal{H}_{\text{\textit{fcd}}}$ is 
virtually flat 
of finite rank 
implies that 
there is 
an $O$ in $\mathcal{B}(G)$
such that 
the orbit $\mathcal{H}_{\text{\textit{fcd}}}.O$ 
is proper 
and 
coarsely connected.   
%
Let $\mathcal{H}_f$ be 
a flat subgroup 
of finite index 
in $\mathcal{H}_{\text{\textit{fcd}}}$ 
and 
$O$ a minimizing subgroup 
for $\mathcal{H}_f$. 
By the remark 
in the paragraph 
preceding Theorem~\ref{thm:lf-polygrow-orbits=>fg-nilp-mod-Stab(cpop<G)}, 
the orbit $\mathcal{H}_f.O$ 
is proper  
and 
coarsely connected.   
%
Since 
$\mathcal{H}_f$ has finite index 
in $\mathcal{H}_{\text{\textit{fcd}}}$, 
the analogous statement 
holds for 
the orbit $\mathcal{H}_{\text{\textit{fcd}}}.O$. 

Define 
a graph $X$ 
as in the proof 
of Theorem~\ref{thm:lf-polygrow-orbits=>fg-nilp-mod-Stab(cpop<G)}. 
Denote by $H$ 
the subgroup 
of $\overline{FC}$-elements in 
the closure, 
$\overline{\mathcal{H}_{\text{\textit{fcd}}}}$, 
of the group 
of automorphisms 
of $X$ 
induced by $\mathcal{H}_{\text{\textit{fcd}}}$ 
in the group 
of all graph-automorphisms 
of $X$. 
As in the proof 
of Theorem~\ref{thm:lf-polygrow-orbits=>fg-nilp-mod-Stab(cpop<G)} 
we conclude that 
the group $\overline{\mathcal{H}_{\text{\textit{fcd}}}}$ 
is totally disconnected and locally compact. 
Theorem~2 in~\cite{FC-ele<tdGs&auts(infGphs)} 
implies that 
$H$ is 
a closed subgroup 
of $\overline{\mathcal{H}_{\text{\textit{fcd}}}}$. 
Note that 
by our assumption 
$\mathcal{H}_{\text{\textit{fcd}}}$ equals $({\mathcal{H}_{\text{\textit{fcd}}}})_{{FC}_d}$ 
and by 
the definition of 
the topology on $\Aut X$, 
$H$ contains 
the image 
of $\mathcal{H}_{\text{\textit{fcd}}}$  
in $\Aut X$ 
and hence 
equals $\overline{\mathcal{H}_{\text{\textit{fcd}}}}$. 
Thus 
$H$ is 
is a totally disconnected, locally compact, compactly generated $\overline{FC}$-group. 

Using Theorem~3.20 in~\cite{cpness-conds<topGs}, 
and 
remembering that 
$H$ is totally disconnected, 
we conclude that 
the group $H$ 
has a compact, normal 
subgroup $P$ 
with 
discrete, torsion free, 
finitely generated  
abelian quotient $A$. 

Put 
$\mathcal{H}_0:= \mathcal{H}_{\text{\textit{fcd}}}$ 
and 
let $\mathcal{N}_0$ 
be equal to 
the kernel 
of the composite homomorphism $\mathcal{H}_{\text{\textit{fcd}}}\to H\to A$. 
Then 
$\mathcal{N}_0$ is 
a normal subgroup 
of  $\mathcal{H}_0$
and 
$\mathcal{H}_0/\mathcal{N}_0$ is 
a finitely generated, nilpotent (in fact, abelian) group. 

Using that 
$\mathcal{N}_0$ is contained in 
the compact subgroup $P$ 
of $\Aut X$ 
we conclude 
as in 
the proof 
of Theorem~\ref{thm:lf-polygrow-orbits=>fg-nilp-mod-Stab(cpop<G)} 
that 
there exists 
$V\in\mathcal{B}(G)$ 
such that 
$\mathcal{N}_0$ stabilizes $V$. 
Applying Theorem~\ref{thm:nilpotent=flat} 
we conclude that 
the group $\mathcal{H}_0= \mathcal{H}_{\text{\textit{fcd}}}$ 
is flat 
of finite rank. 
We have shown 
all parts 
of our claim. 
\qed 
\end{proof}

As explained 
in the paragraph 
preceding 
Lemma~\ref{lem:1_bounded-conjugacy-classe=max-normal-flat}, 
Theorem~\ref{thm:bounded-conjugacy-classe=max-normal-flat} follows 
from what 
we have 
just shown. 

From 
Lemmas~\ref{lem:1_bounded-conjugacy-classe=max-normal-flat}  
and~\ref{lem:2_bounded-conjugacy-classe=max-normal-flat} 
we now 
obtain,  
in the `proper case',  
another characterization of 
virtually flat groups 
of automorphisms 
in terms of 
their elements 
with  
bounded conjugacy class. 

\begin{theorem}%
\label{thm:}
Let $G$ be 
a totally disconnected, locally compact group 
and 
let $\mathcal{H}$ 
be a group of automorphisms 
of $G$. 
Assume that 
there is 
an $O$ in $\mathcal{B}(G)$
such that 
$\mathcal{H}.O$ 
is proper 
and 
coarsely connected. 
Then 
the following conditions 
are equivalent: 
\begin{enumerate}
\item 
$\mathcal{H}$ is virtually flat 
of finite rank. 
\item 
$|\mathcal{H}\colon \mathcal{H}_{FC_d}|$ is finite. 
\end{enumerate}
\end{theorem}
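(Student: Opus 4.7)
The plan is to treat the two implications separately, with the direction (2)\,$\Rightarrow$\,(1) being the substantive one.

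For (1)\,$\Rightarrow$\,(2), I would simply invoke Lemma~\ref{lem:1_bounded-conjugacy-classe=max-normal-flat}, whose final assertion is precisely that $\mathcal{H}_{FC_d}$ has finite index in $\mathcal{H}$ whenever the latter is virtually flat (of finite rank). No additional argument is needed.

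For (2)\,$\Rightarrow$\,(1), the strategy is to show that $\mathcal{H}_{FC_d}$ is itself flat of finite rank; virtual flatness of $\mathcal{H}$ then follows since $|\mathcal{H}\colon\mathcal{H}_{FC_d}|<\infty$ by hypothesis. I would apply Lemma~\ref{lem:2_bounded-conjugacy-classe=max-normal-flat} with $\mathcal{H}_{\text{\textit{fcd}}}:=\mathcal{H}_{FC_d}$. This requires verifying three hypotheses: (i) some orbit $\mathcal{H}_{FC_d}.O'$ is proper, (ii) that orbit is coarsely connected, and (iii) $(\mathcal{H}_{FC_d})_{FC_d}=\mathcal{H}_{FC_d}$. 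Hypothesis (iii) is the content of Proposition~\ref{prop:Props(H_FCd)}. Hypothesis~(i) (taking $O'=O$) is essentially free: any two distinct compact open subgroups lie at distance at least $\log 2$, so $\mathcal{B}(G)$ is uniformly discrete; thus properness in this setting amounts to closed balls being finite, a property passed to arbitrary subsets, and in particular to $\mathcal{H}_{FC_d}.O\subseteq \mathcal{H}.O$.

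The main obstacle, and the only genuinely substantive step, is hypothesis~(ii). My plan here is to transfer coarse connectedness from $\mathcal{H}.O$ to $\mathcal{H}_{FC_d}.O$ via a bounded generating set. If $\mathcal{H}.O$ is $d_0$-coarsely connected, a routine chain-lifting argument shows that the set $S_\mathcal{H}:=\{\varphi\in\mathcal{H}\colon d(\varphi(O),O)\le d_0\}$ together with $\mathrm{Stab}_\mathcal{H}(O)$ generates $\mathcal{H}$; by properness, $S_\mathcal{H}.O$ is finite, so $S_\mathcal{H}$ is bounded in the bornological sense of Lemma~\ref{lem:bornolG-struc(d)}, as is $\mathrm{Stab}_\mathcal{H}(O)$. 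Since $\mathcal{H}_{FC_d}$ is normal in $\mathcal{H}$ (Proposition~\ref{prop:Props(H_FCd)}) and of finite index (by~(2)), a Reidemeister--Schreier rewriting with respect to a finite transversal $T$ expresses $\mathcal{H}_{FC_d}$ as generated by a set built from elements of $T$, $T^{-1}$, and $S_\mathcal{H}\cup\mathrm{Stab}_\mathcal{H}(O)$ by finitely many products. Because products, inverses, and finite unions of bounded sets are bounded (Lemma~\ref{lem:bornolG-struc(d)}) and $T$ is finite, this yields a bounded generating set $S_{FC_d}$ for $\mathcal{H}_{FC_d}$. The natural map from the Cayley graph of $(\mathcal{H}_{FC_d}, S_{FC_d})$ to $\mathcal{H}_{FC_d}.O$ then sends adjacent vertices to points at uniformly bounded distance, which is exactly the coarse connectedness of $\mathcal{H}_{FC_d}.O$. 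With (i)--(iii) in hand, Lemma~\ref{lem:2_bounded-conjugacy-classe=max-normal-flat} delivers that $\mathcal{H}_{FC_d}$ is flat of finite rank, completing the argument.
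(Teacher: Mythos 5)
Your proposal matches the paper's proof exactly in structure: (1)$\Rightarrow$(2) is Lemma~\ref{lem:1_bounded-conjugacy-classe=max-normal-flat}, and (2)$\Rightarrow$(1) is Lemma~\ref{lem:2_bounded-conjugacy-classe=max-normal-flat} applied to $\mathcal{H}_{FC_d}$; the paper simply asserts that $\mathcal{H}_{FC_d}$ satisfies the hypotheses of that lemma, whereas you verify them, and your verification is correct. The only remark worth making is that your Reidemeister--Schreier argument for coarse connectedness, while valid, is more elaborate than necessary: since $\mathcal{H}_{FC_d}$ has finite index in $\mathcal{H}$, writing $\mathcal{H}=\bigcup_i \mathcal{H}_{FC_d}\,\alpha_i$ shows every point $\psi\alpha_i(O)$ of $\mathcal{H}.O$ lies within $\max_i d(\alpha_i(O),O)$ of the point $\psi(O)$ of $\mathcal{H}_{FC_d}.O$, so the suborbit is coarsely dense in the coarsely connected set $\mathcal{H}.O$ and is therefore itself coarsely connected.
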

\begin{proof}
That 
(1) implies~(2) 
follows from 
Lemma~\ref{lem:1_bounded-conjugacy-classe=max-normal-flat}. 
Assume 
conversely 
that (2)~holds. 
Then 
the subgroup $\mathcal{H}_{FC_d}$ 
of $\mathcal{H}$
satisfies 
the conditions 
on the group $\mathcal{H}_{\text{\textit{fcd}}}$ 
in Lemma~\ref{lem:2_bounded-conjugacy-classe=max-normal-flat}. 
Applying Lemma~\ref{lem:2_bounded-conjugacy-classe=max-normal-flat} 
we conclude that 
the group $\mathcal{H}_{FC_d}$ 
is flat 
of finite rank. 
Since 
$\mathcal{H}_{FC_d}$ is  
a subgroup 
of finite index 
in $\mathcal{H}$ 
by assumption, 
$\mathcal{H}$ is virtually flat 
of finite rank, 
which is 
the content of 
statement~(1). 
%
\qed 
\end{proof}

\section{An Example and an Application}\label{sec:examples&applications}

A group 
of automorphisms 
of a totally disconnected, locally compact group 
whose orbits 
in the space 
of compact, open subgroups 
are quasi-flats 
need not be 
a flat group 
as 
the following example 
illustrates. 
Conjecture~16 
in the paper~\cite{flatrk(AutGs(buildings))} 
is 
therefore 
false  
as stated; 
the first and third author 
of the current paper 
should have known this 
while being involved 
in writing 
the paper~\cite{flatrk(AutGs(buildings))},  
because 
they obtained 
Theorem~\ref{thm:max-flat<Gs(semisimpleGs)}
below 
by different methods 
in unpublished work 
done in 
the year 2000. 

\begin{example}\label{ex:}
Let 
$p$ be 
a prime number 
and 
$G$ 
the additive group 
of the $2$-dimen\-sional vector space 
over $\mathbb{Q}_p$. 
The metric space 
of compact, open subgroups 
of $G$ 
is proper. 
Let $\mathcal{H}$ be 
the group of automorphisms 
of $G$ 
that is 
generated by 
the following two 
linear transformations: 
\[
s_1:=
\left(\begin{array}{cc}0\;\; & -1 \\1\;\; &\hfill 0\end{array}\right)
\qquad
s_2:=
\left(\begin{array}{cc}0\;\; & -p^{-1} \\p\;\; & 0\end{array}\right)=
\left(\begin{array}{cc}0\;\; & -1 \\1\;\; &\hfill 0\end{array}\right)\cdot 
\left(\begin{array}{cc}p\;\; & 0 \\0\;\; & p^{-1}\end{array}\right)=:
s_1u
\]
We will show 
that 
the orbits 
of compact, open subgroups 
under $\mathcal{H}$ 
are quasi-isometric 
to $\mathbb{R}$, 
but 
that 
the group $\mathcal{H}$ 
is not flat. 
\end{example}
\begin{proof}
%
The group $\mathcal{H}$ is 
isomorphic to 
the infinite dihedral group 
with canonical generators 
$s_1$ and~$s_2$. 

The group $\mathcal{T}:=\langle u\rangle$ 
has index~$2$ 
in $\mathcal{H}$. 
Since 
$\mathcal{T}$ is generated 
by one element, 
it is a flat group 
whose rank 
is either~$0$ or~$1$; 
this may be seen 
for example 
by applying 
Lemma~2 in~\cite{direction(aut(tdlcG))} 
with $u:=\alpha$. 
%
The group $\mathcal{T}$ 
has unbounded orbits 
in $\mathcal{B}(G)$; 
hence 
the rank 
of $\mathcal{T}$ 
must be larger than~$0$ 
and hence 
is~$1$. 
Using Theorem~\ref{thm:norm>flat-gp} 
we conclude 
the orbits of 
the group $\mathcal{T}$ 
in $\mathcal{B}(G)$ 
are 
quasi-isometric to $\mathbb{R}$. 
Since $\mathcal{T}$ 
has finite index 
in $\mathcal{H}$ 
the same 
is true of 
the orbits 
of the group $\mathcal{H}$ 
in $\mathcal{B}(G)$, 
as claimed.  

%
We now show 
by contradiction 
that  
the group $\mathcal{H}$ 
is not flat. 
For suppose 
it were. 
Then 
the elements $s_1$ and~$s_2$ 
are contained 
in the subgroup $\mathcal{H}(1)$, 
because 
they have finite order. 
Since 
$s_1$ and $s_2$ 
generate $\mathcal{H}$, 
we then must have 
$\mathcal{H}=\mathcal{H}(1)$, 
the rank 
of $\mathcal{H}$ 
would be~$0$ 
and 
all orbits 
of $\mathcal{H}$ 
in $\mathcal{B}(G)$ 
would be bounded. 
This is a contradiction 
to what 
we have shown 
to be the case 
above 
and 
we conclude that 
$\mathcal{H}$ is not flat, 
as claimed.  
\qed 
\end{proof}

\begin{remark}
The phenomenon 
illustrated in 
the above example 
does not occur 
with groups 
of automorphisms 
whose orbits 
in $\mathcal{B}(G)$ 
are bounded. 
Under this condition 
one need not 
even 
impose 
a condition 
on 
$\mathcal{B}(G)$. 
That is, 
any group 
of automorphisms 
with bounded orbits 
in $\mathcal{B}(G)$ 
is flat 
of rank~$0$. 
This statement 
is 
Proposition~5 in~\cite{direction(aut(tdlcG))}. 
It is 
a consequence of 
the main result 
of Schlichting's paper~\cite{Op:perio-Stab}; 
the latter result 
has been generalized 
in~\cite{sG-close-to-normal} 
and%
~\cite{almost-inv-families}. 

This stronger form 
of the geometric flatness criterion 
in the rank-$0$-case 
has 
played an important role 
in 
the recent study, 
by Y. Shalom and G. Willis, 
of 
almost normal subgroups of 
arithmetic groups via suitable completions. 
\end{remark}

We 
now show 
how 
the results 
of this paper 
lead to 
a remarkably 
simple 
and straightforward proof 
of the following result. 
These notions 
are used 
in its formulation: 
A flat subgroup 
is a subgroup 
that is flat 
as a group 
of inner automorphisms; 
the flat rank 
of a totally disconnected, locally compact group 
is the supremum 
of the ranks 
of its flat subgroups.

\begin{theorem}
\label{thm:max-flat<Gs(semisimpleGs)}
Let 
$k$ be 
a nonarchimedean local field 
and 
$\mathbf{G}$ 
a connected semisimple group 
that is 
defined over $k$. 
Then, 
for any 
maximal $k$-split torus $\mathbf{S}$ 
of $\mathbf{G}$ 
the group $\mathcal{Z}_{\mathbf{G}}(\mathbf{S})(k)$ 
is a maximal flat subgroup 
of $\mathbf{G}(k)$ 
of maximal rank,  
equal to 
the $k$-rank 
of $\mathbf{G}$. 
In particular, 
the group 
$\mathcal{N}_\mathbf{G}(\mathbf{S})(k)$ 
is not flat; 
however 
the orbits 
of this group  
in $\mathcal{B}(\mathbf{G}(k))$ 
are 
quasi-flats 
of dimension $k\operatorname{-rank}(\mathbf{G})=\operatorname{flat-rank}(\mathbf{G}(k))$.   
\end{theorem}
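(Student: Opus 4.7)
The plan is to apply Theorems~\ref{thm:MainThm_weak-form} and~\ref{thm:bounded-conjugacy-classe=max-normal-flat} to $\mathcal{H} := \mathcal{N}_\mathbf{G}(\mathbf{S})(k)$ acting on $\mathbf{G}(k)$ by conjugation, using as geometric model the Bruhat--Tits building $X$ of $\mathbf{G}(k)$. Since $k$ has finite residue field, $X$ is locally finite; via the $\mathbf{G}(k)$-equivariant assignment sending a point of $X$ to its stabilizer (a compact open subgroup of $\mathbf{G}(k)$), one deduces that $\mathcal{B}(\mathbf{G}(k))$ is proper. The apartment $A \subseteq X$ attached to $\mathbf{S}$ is an affine space of dimension $n := k\operatorname{-rank}(\mathbf{G})$ preserved setwise by $\mathcal{H}$, and the induced action of $\mathcal{H}$ on $A$ factors through the extended affine Weyl group $W' \cong W_0 \ltimes \Lambda$ (where $W_0 := \mathcal{H}/\mathcal{Z}_\mathbf{G}(\mathbf{S})(k)$ is the finite relative Weyl group, and $\Lambda$ is the cocompact translation lattice coming from $\mathcal{Z}_\mathbf{G}(\mathbf{S})(k)$) with compact kernel.

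Fix a special vertex $v \in A$ and let $O \in \mathcal{B}(\mathbf{G}(k))$ be its stabilizer. Since the $W'$-orbit of $v$ is coarsely dense in $A \cong \mathbb{R}^n$, the orbit $\mathcal{H}.O$ is proper, coarsely connected, and quasi-isometric to $\mathbb{R}^n$. Theorem~\ref{thm:MainThm_weak-form} then produces a finite-index subgroup of $\mathcal{H}$ that is flat of rank $n$, so $\mathcal{H}$ is virtually flat of rank $n$; combined with the fact that quasi-flats in $X$ have dimension bounded by that of an apartment (cf.~\cite{flatrk(AutGs(buildings))}), this already gives $\operatorname{flat-rank}(\mathbf{G}(k)) = n$.

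The central step is to identify the maximal normal flat subgroup $\mathcal{H}_{FC_d}$ furnished by Theorem~\ref{thm:bounded-conjugacy-classe=max-normal-flat} with $\mathcal{Z} := \mathcal{Z}_\mathbf{G}(\mathbf{S})(k)$, by direct calculation in $W'$. For $z \in \mathcal{Z}$, the class $z^\mathcal{H}$ lies in the normal subgroup $\mathcal{Z}$, and its image in $W'$ is the $W_0$-orbit of the translation $\bar z \in \Lambda$, a finite set; since the kernel of $\mathcal{Z} \to \Lambda$ is compact, $z^\mathcal{H}$ is bounded in $\mathcal{H}$, so $\mathcal{Z} \subseteq \mathcal{H}_{FC_d}$. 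For the reverse inclusion, any $h \in \mathcal{H} \setminus \mathcal{Z}$ maps to an element of $W'$ with non-trivial rotational component $w_0 \in W_0 \setminus \{1\}$, and conjugation by translations $t_\mu$ for $\mu \in \Lambda$ shifts the translation part of $h$ by elements of the sublattice $(w_0^{-1} - I)(\Lambda) \subseteq \Lambda$; this sublattice is non-trivial (as $w_0 \neq 1$) and hence unbounded in $A$, so $h^\mathcal{H}$ is unbounded in $\mathcal{H}$. Consequently $\mathcal{H}_{FC_d} = \mathcal{Z}$, which is therefore flat of rank $n$.

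The remaining assertions follow quickly: if $\mathcal{H}$ were itself flat, Proposition~\ref{prop:flatGs-have-bounded-conjugacy-classes} would force $\mathcal{H} = \mathcal{H}_{FC_d} = \mathcal{Z}$, contradicting $W_0 \neq 1$; and that the orbits of $\mathcal{H}$ in $\mathcal{B}(\mathbf{G}(k))$ are quasi-flats of dimension $n$ follows from Lemma~\ref{lem:comparison(orbits(Gs(Isoms)))} applied to $\mathcal{H}$ and its finite-index subgroup $\mathcal{Z}$. The principal obstacle will be the identification of $\mathcal{H}_{FC_d}$ in the third paragraph: while the arithmetic in $W'$ is elementary, one must carefully verify that unboundedness of translation parts in $A$ genuinely implies unboundedness as subsets of $\mathcal{H}$ in the bornology induced from $\mathcal{B}(\mathbf{G}(k))$ — a translation legitimized by the compactness of the kernel of $\mathcal{H} \to W'$.
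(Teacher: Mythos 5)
Your proposal follows essentially the same route as the paper: pass to the Bruhat--Tits building, use the apartment $A_{\mathbf{S}}$ to see that the orbit of a vertex stabilizer under $\mathcal{N}_\mathbf{G}(\mathbf{S})(k)$ is a proper, coarsely connected quasi-flat of dimension $n=k\operatorname{-rank}(\mathbf{G})$, invoke Theorem~\ref{thm:MainThm_weak-form} to get virtual flatness of rank $n$, and then identify $\mathcal{Z}_\mathbf{G}(\mathbf{S})(k)$ with the subgroup of bounded conjugacy classes so that Theorem~\ref{thm:bounded-conjugacy-classe=max-normal-flat} applies. Your computation of $N_{FC_d}$ inside the extended affine Weyl group (conjugating $t_\lambda w_0$ by $t_\mu$ shifts the translation part by $(I-w_0)(\mu)$, which ranges over an infinite subgroup of $\Lambda$ when $w_0\neq 1$) is an algebraic rendering of the paper's geometric argument that a conjugate of a reflection or rotation by a translation of large displacement moves a given point of $A_{\mathbf{S}}$ by an arbitrarily large amount; both are fine, and you correctly flag that one must pass between displacement in the apartment and boundedness in $\mathcal{B}(\mathbf{G}(k))$, which the paper does via Theorem~7 of \cite{flatrk(AutGs(buildings))}.

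There is, however, one genuine gap: the theorem asserts that $\mathcal{Z}_\mathbf{G}(\mathbf{S})(k)$ is a \emph{maximal} flat subgroup of $\mathbf{G}(k)$, not merely a flat subgroup of maximal rank, and your proposal never addresses this. Theorem~\ref{thm:bounded-conjugacy-classe=max-normal-flat} only tells you that $T:=\mathcal{Z}_\mathbf{G}(\mathbf{S})(k)=N_{FC_d}$ contains every flat subgroup of finite index \emph{in} $N=\mathcal{N}_\mathbf{G}(\mathbf{S})(k)$; a priori $T$ could still be properly contained in a flat subgroup of $\mathbf{G}(k)$ that does not lie in $N$. The paper closes this by recording that $N$ is the \emph{full} stabilizer of $A_{\mathbf{S}}$ in $\mathbf{G}(k)$ (because the building is also the building of a saturated $BN$-pair): a flat subgroup $F\supseteq T$ has orbits that are quasi-flats of dimension at most $n$ containing the coarsely dense $T$-orbit in $A_{\mathbf{S}}$, so $F$ must preserve $A_{\mathbf{S}}$ and hence satisfies $F\leqslant N$; since $F$ then has finite index in $N$, Theorem~\ref{thm:bounded-conjugacy-classe=max-normal-flat} forces $F\leqslant N_{FC_d}=T$. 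You need to add this step (or some substitute for it) to obtain the maximality claim.
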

\begin{proof}
It will be 
obvious 
form the proof,  
that 
an analogous result 
may be obtained 
in other cases 
also. 
The action 
of the group $\mathbf{G}(k)$ 
on the Bruhat-Tits building $X(\mathbf{G},k)$ 
of $\mathbf{G}$ 
over $k$ 
will be used.  

%
Let 
$\mathbf{S}$ be 
a maximal $k$-split torus 
of $\mathbf{G}$. 
The apartment $A_\mathbf{S}$ 
associated to $\mathbf{S}$ 
is 
an affine subspace 
of $X(\mathbf{G},k)$
of maximal dimension. 
The group $N:=\mathcal{N}_\mathbf{G}(\mathbf{S})(k)$ 
stabilizes $A_\mathbf{S}$ 
and 
acts there 
by affine isometries. 
The subgroup 
of $N$ 
that acts 
by translations 
on $A_\mathbf{S}$ 
is $T:=\mathcal{Z}_\mathbf{G}(\mathbf{S})(k)$. 
Since 
$X(\mathbf{G},k)$ 
is also 
the building 
defined by 
a saturated $BN$-pair, 
(namely 
$(B,N)$, 
where 
$B$ is 
a suitable Iwahori subgroup) 
the group $N=\mathcal{N}_\mathbf{G}(\mathbf{S})(k)$ 
is 
the full stabilizer 
of $A_\mathbf{S}$ 
in $\mathbf{G}(k)$. 

We next use 
Theorem~7 from~\cite{flatrk(AutGs(buildings))} 
to turn 
the statements 
of the previous paragraph 
into 
statements 
about 
the action 
of $\mathbf{G}(k)$ 
on $\mathcal{B}(\mathbf{G}(k))$. 
%
We conclude that 
the orbits 
of $N$ and $T$ 
in $\mathcal{B}(\mathbf{G}(k))$ 
are 
quasi-flats 
of dimension $k\operatorname{-rank}(\mathbf{G})$. 
That 
$k\operatorname{-rank}(\mathbf{G})=\operatorname{flat-rank}(\mathbf{G}(k))$ 
follows from 
Corollary~19 in~\cite{flatrk(AutGs(buildings))}.  

%
The orbits of 
the stabilizer of 
a chamber 
in $A_\mathbf{S}$ 
under $N$ and $T$ 
are proper; 
in fact, 
the metric space $\mathcal{B}(\mathbf{G}(k))$ 
is proper. 
From either 
Theorem~\ref{thm:MainThm} 
or
Theorem~\ref{thm:MainThm_weak-form} 
we infer that 
$N$ and $T$ 
are virtually flat 
subgroups 
of $\mathbf{G}(k)$ 
of maximal rank. 
We claim 
next 
that 
the subgroup 
of bounded conjugacy classes $N_{{FC}_d}$ 
of $N$ 
equals $T$. 
Applying Theorem~\ref{thm:bounded-conjugacy-classe=max-normal-flat} 
to this statement 
yields that 
$T$ is flat; 
since 
we know that 
$N$ is 
the full stabilizer 
of $A_\mathbf{S}$, 
every 
flat subgroup 
containing $T$ 
must be 
contained in $N$ 
and 
we will 
obtain 
our remaining claims. 

We are 
therefore 
reduced to proving 
that 
$N_{{FC}_d}=T$. 
The group $T$ 
is 
contained in $N_{{FC}_d}$ 
because 
it acts 
by translations 
on $A_\mathbf{S}$ 
and 
any conjugate 
of a translation 
is a translation 
which displaces 
points 
by the same amount 
as the original translation. 
The set $N\smallsetminus T$ 
consists of 
elements 
which act as
reflections or rotations 
on $A_\mathbf{S}$; 
no such element 
can be 
contained in $N_{{FC}_d}$, 
because 
given 
a fixed point 
of $A_\mathbf{S}$ 
that point 
will be displaced 
by 
an arbitrarily large amount 
by 
a conjugate of  
such a transformation 
by a translation 
of sufficiently large displacement. 
Thus 
$N_{{FC}_d}=T$, 
concluding 
our proof. 
\qed 
\end{proof}

It is uncertain 
whether  
one can 
also 
find a minimizing subgroup 
for the group $\mathcal{Z}_{\mathbf{G}}(\mathbf{S})(k)$ 
in the previous result 
along the same lines. 
After all,  
our approach 
avoided 
the task 
of finding 
a minimizing subgroup.

A minimizing subgroup 
for the group  $\mathcal{Z}_{\mathbf{G}}(\mathbf{S})(k)$ 
is known; 
compare 
Theorem~\ref{Z flat} below. 
We omit the proof 
of this result, 
hoping to find 
a simpler approach 
to it 
which 
might also 
apply
in more general situations. 

\begin{theorem}\label{Z flat}
Let 
$k$ be 
a nonarchimedean local field 
and 
$\mathbf{G}$ 
a connected semisimple group 
that is 
defined over $k$. 
Then, 
for any maximal $k$-split torus $\mathbf{S}$ 
of $\mathbf{G}$ 
the stabilizer of 
any chamber 
of the affine apartment 
corresponding to $\mathbf{S}$ 
is minimizing 
for the group $\mathcal{Z}_{\mathbf{G}}(\mathbf{S})(k)$.
\end{theorem}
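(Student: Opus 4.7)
My plan is to work in the Bruhat-Tits building $X(\mathbf{G},k)$ and exploit the Iwahori factorization of $B$, the pointwise stabilizer of the chosen chamber $C$ in the apartment $A_\mathbf{S}$. The relative root system of $\mathbf{S}$ on $\mathbf{G}$ yields a family of affine root groups $\{U_{\alpha,\ell}\}$ parametrized by relative roots $\alpha$ and half-integers $\ell$; the group $B$ admits the standard product decomposition into $T_0:=T\cap B$ (the maximal compact subgroup of $T=\mathcal{Z}_\mathbf{G}(\mathbf{S})(k)$) together with those affine root subgroups whose defining affine half-space contains $C$.

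For $t\in T$, conjugation by $t$ normalizes $T_0$ (since $T$ is commutative) and translates each affine root group by the rule $\ell\mapsto\ell-\alpha(\nu(t))$, where $\nu\colon T\to X_*(\mathbf{S})\otimes\mathbb{R}$ is the Bruhat-Tits translation map that identifies $T/T_0$ with a lattice of translations on $A_\mathbf{S}$. Using the uniqueness of the Iwahori factorization I can compute $B\cap tBt^{-1}$ factor by factor, which yields
\[
\bigl[\,tBt^{-1}:B\cap tBt^{-1}\,\bigr]=\prod_{\alpha:\,\alpha(\nu(t))>0}q^{d_\alpha\,\alpha(\nu(t))}\,,
\]
where $q$ is the residue cardinality and $d_\alpha$ is the dimension of the relative root subgroup indexed by $\alpha$.

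The heart of the proof is to show that this index equals $s_G(t)$, so that $B$ actually realizes the minimum in the definition of the scale at every $t\in T$. I would do this by checking directly that $B$ is tidy for each such $t$ in the sense of Willis: write $B=B_+B_-$ with $B_\pm$ the product of affine root subgroups indexed by roots $\alpha$ satisfying $\pm\alpha(\nu(t))\geq 0$, and verify that $tB_+t^{-1}\supseteq B_+$, $tB_-t^{-1}\subseteq B_-$, that $B_+\cap B_-$ is compact and normalized by $t$, and that the ascending union $\bigcup_n t^nB_+t^{-n}$ is closed. Willis's characterization of the scale then promotes the computation above to the identity $[\,tBt^{-1}:B\cap tBt^{-1}\,]=s_G(t)$, which is precisely what it means for $B$ to be minimizing for $T$.

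The main obstacle, and presumably the reason the authors omit the proof and hope for a cleaner approach, is that tidiness has to be verified \emph{uniformly} for every $t\in T$, not just for a single chosen element. Elements $t$ whose translation $\nu(t)$ lies on a wall of a Weyl chamber are the delicate case: the partition of relative roots into expanding and contracting degenerates on such walls, so the decomposition $B=B_+B_-$ must be chosen in a way that is simultaneously compatible with every $t\in T$, which forces one to work with the finer data of sign functions on the full root system rather than a single element. Bypassing this bookkeeping by combining the orbit geometry of Theorem~\ref{thm:max-flat<Gs(semisimpleGs)} with an intrinsic rigidity property for minimizing subgroups would constitute the "simpler approach" the authors allude to in the paragraph preceding the statement.
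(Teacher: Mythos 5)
The paper does not actually contain a proof of Theorem~\ref{Z flat}: the authors state explicitly that they omit it, hoping for a simpler approach. So there is nothing to compare your argument against, and it must be judged on its own. Your outline is the expected route --- compute with the Iwahori factorization and verify that the chamber stabilizer is tidy for each $t\in T=\mathcal{Z}_{\mathbf{G}}(\mathbf{S})(k)$, then invoke Willis's theorem that tidy subgroups realize the scale \cite{tidy<:commAut(tdlcG)} --- and as a plan it is sound. But as written it is a plan, not a proof. The substantive steps are all asserted rather than carried out: that the subgroups $B_{\pm}$ you build from affine root groups coincide with the canonical subgroups $\bigcap_{n\geq 0}t^{\pm n}Bt^{\mp n}$ from Willis's theory; that $B=B_{+}B_{-}$ (this is an Iwahori factorization relative to the parabolic determined by $\nu(t)$, and needs the ordering of the root-group factors to be handled); and above all the condition that $\bigcup_{n}t^{n}B_{+}t^{-n}$ is closed, which is the only genuinely non-formal part and requires identifying that union with the $k$-points of the unipotent radical of the parabolic subgroup attached to the direction of $\nu(t)$. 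There is also a small mismatch of objects: you work with the \emph{pointwise} stabilizer (Iwahori subgroup) of the chamber, whereas the theorem speaks of the stabilizer, and these can differ by a finite group; one would have to say which is meant and why the argument survives the difference, since being minimizing is not a commensurability invariant.

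Your diagnosis of the main obstacle is also off the mark. Being minimizing for the group $T$ is, by definition, an element-wise condition: one must check that $|tBt^{-1}:tBt^{-1}\cap B|=s_{G}(t)$ for each $t$ separately, and the tidy decomposition $B=B_{+}B_{-}$ used in that check is canonically attached to $t$ and is allowed to vary with $t$. No decomposition ``simultaneously compatible with every $t\in T$'' is required, and elements with $\alpha(\nu(t))=0$ for some relative root $\alpha$ are not delicate --- the corresponding affine root groups are simply preserved by conjugation and land in $B_{+}\cap B_{-}$. The actual difficulty, and presumably the reason the authors defer the proof, is the uniform verification of the closedness condition above across all directions $\nu(t)$, together with the root-system bookkeeping (non-reduced systems, varying jumps in the filtrations of the root groups) needed to make your index formula and the comparison with $s_{G}(t)$ precise. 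If you intend this as a complete proof rather than a sketch, those verifications, in the spirit of the building computations of \cite{flatrk(AutGs(buildings))}, are what must be supplied.
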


 
\def\cprime{$'$} \def\cprime{$'$} \def\cprime{$'$} \def\cprime{$'$}

\end{document}